\documentclass[12pt]{article}
	\usepackage{array}
	\usepackage{graphicx}
	\usepackage{caption}
	\usepackage{amsthm,amsmath,amssymb,color,hyperref}
	\usepackage{fullpage}
	\usepackage{booktabs}
	\usepackage{geometry}
	\usepackage{tikz}
	\newtheorem{thm}{Theorem}[section]
	
	\newtheorem{lemma}[thm]{Lemma}
	\newtheorem{obs}[thm]{Observation}
	\newtheorem{corollary}[thm]{Corollary}
	
	\newtheorem{conj}[thm]{Conjecture}
	
	\newtheorem{defn}[thm]{Definition}
	\newtheorem{prop}[thm]{Proposition}

\begin{document}

	\title{A sharp Randi\'c bound for K\"onig--Egerv\'ary graphs\\
	and a conjecture of Aouchiche, Hansen, and Zheng}
	\author{
	Pei Liu\thanks{Department of Mathematics, Sungkyunkwan University, Korea, Suwon, 16419. liupei2023@g.skku.edu. Research supported by China Scholarship Council.}\,~
	Feiyu Nan\thanks{School of Mathematical Sciences, Nankai University, Tianjin 300071, China.
	2211185@mail.nankai.edu.cn.}\,~
	Suil O\thanks{Department of Applied Mathematics and Statistics, The State University of
	New York, Korea, Incheon, 21985. suil.o@sunykorea.ac.kr (Corresponding author). Research supported
	by the National Research Foundation of Korea (NRF) grant funded by the Korea
	government (MSIT) No. RS-2025-23523950.}\,~ and
	Ruiling Zheng\thanks{School of Mathematics and Statistics, Guangdong University of
	Foreign Studies. rlzheng@gdufs.edu.cn.}
	}

	\maketitle

	\begin{abstract}
	Let $\alpha'(G)$ be the matching number of a graph $G$, and let its Randi\'c index be
	$R(G)=\sum_{uv\in E(G)}(d(u)d(v))^{-1/2}$. In 2006, Aouchiche,
	Hansen, and Zheng conjectured that the maximum of $R(G)-\alpha'(G)$ over all $n$-vertex
	graphs is attained by the complete bipartite graph whose smaller part has
	$\lfloor\frac{n+4}{7}\rfloor$ vertices; the conjecture has remained open since then.

	In this paper, we prove that every $n$-vertex K\"onig--Egerv\'ary graph, and in
	particular every bipartite graph, satisfies
	\[
	R(G)\le\sqrt{\alpha'(G)\left(n-\alpha'(G)\right)},
	\]
	and we characterize the graphs attaining equality as the bipartite graphs all of whose
	components are semiregular with a common degree ratio. The K\"onig--Egerv\'ary hypothesis
	cannot be dropped, but the Berge--Tutte formula reduces the general case to it, and in
	this way we determine the maximum of $R(G)-\alpha'(G)$ for every $n\ge4$, together with
	all extremal graphs.

	The conjecture is therefore false, and it fails for infinitely many orders: the optimal
	part size is governed by the proportion $\frac{2-\sqrt2}{4}$ rather than by $\frac17$.
	The two proportions give asymptotic slopes differing by less than $3.7\cdot10^{-5}$,
	which is why a search over graphs of small order does not distinguish them. The equality
	statement fails as well, since the extremal graphs are not only the complete bipartite
	ones.\\

	\noindent
	\textbf{Keywords:} Matching number, K\"onig--Egerv\'ary graph, Randi\'c index,
	Berge--Tutte formula, Pell equation \\

	\noindent
	\textbf{AMS subject classification 2020:} 05C70, 05C35, 05C09
	\end{abstract}

	\section{Introduction}

	A \emph{matching} of a graph $G$ is a set of pairwise disjoint edges, and the
	\emph{matching number} $\alpha'(G)$ is the maximum size of a matching in $G$. A
	\emph{vertex cover} of $G$ is a set of vertices meeting every edge, and $\tau(G)$ denotes
	the minimum size of a vertex cover. Always $\alpha'(G)\le\tau(G)$, and a graph with
	$\alpha'(G)=\tau(G)$ is called a \emph{K\"onig--Egerv\'ary graph}. By the
	K\"onig--Egerv\'ary theorem every bipartite graph is a K\"onig--Egerv\'ary graph;
	structural characterizations of this class were obtained by Deming~\cite{D1979} and
	Sterboul~\cite{S1979}. For matching theory in general we refer to Lov\'asz and
	Plummer~\cite{LP1986}. We write $V(G)$ and $E(G)$ for the vertex set and the edge set of
	$G$, and $d(v)$ for the degree of a vertex $v$.

	The starting point of this paper is the following inequality, which bounds a degree-based
	edge weight of a graph by a function of its matching number alone. Its proof combines the
	K\"onig--Egerv\'ary theorem with the Cauchy--Schwarz inequality, and the graphs attaining
	equality admit a clean structural description; passing from K\"onig--Egerv\'ary graphs to
	arbitrary graphs is then done through the Berge--Tutte formula. The weight in question is
	the \emph{Randi\'c index} of $G$,
	\[
	R(G)=\sum_{uv\in E(G)}\frac{1}{\sqrt{d(u)d(v)}},
	\]
	introduced by Randi\'c~\cite{R} in 1975 under the name \emph{branching index}. Bollob\'as
	and Erd\H{o}s~\cite{BE} later generalized it by replacing the exponent $-\frac12$ by an
	arbitrary real number, and the index and its variants have been studied since then; for a
	survey we refer the reader to Li and Shi~\cite{LS}. Relations between $R(G)$ and other graph
	parameters have been investigated, including the minimum degree~\cite{AH,BE,DFR} and the
	maximum degree~\cite{OS2018}. We note that the extremal graphs in the minimum degree problem
	for triangle-free graphs are again complete bipartite~\cite{DFR}. Fajtlowicz~\cite{F} and,
	independently, Araujo and de la Pe\~na~\cite{ADP} showed that $R(G)\le\frac n2$ for every
	graph $G$ with no isolated vertex, with equality if and only if every component of $G$ is
	regular, while Bollob\'as and Erd\H{o}s~\cite{BE} proved that $R(G)\ge\sqrt{n-1}$ in that
	case. The upper bound, together with its equality case, also follows at once from the
	identity
	\[
	R(G)=\frac n2-\sum_{uv\in E(G)}\frac12\left(\frac{1}{\sqrt{d(u)}}-\frac{1}{\sqrt{d(v)}}
	\right)^{\!2}
	\]
	of Caporossi, Gutman, Hansen, and Pavlovi\'c~\cite{CGHP}; this is the form in which we use
	the arithmetic--geometric mean inequality in Section~\ref{sec:main}.

	The relation between $R(G)$ and $\alpha'(G)$ was investigated systematically by Aouchiche,
	Hansen, and Zheng~\cite{AHZ,AHZ1} using the AutoGraphiX system~\cite{CH}. They proved that
	$R(G)/\alpha'(G)\le\sqrt{n-1}$ for every connected $n$-vertex graph, and they proposed the
	following conjecture on the difference $R(G)-\alpha'(G)$, which has remained open. Lower
	bounds for the ratio $R(G)/\alpha'(G)$ were studied recently by Akbari et
	al.~\cite{ANGHT}. Throughout the paper we write
	\[
	f_n(s)=\sqrt{s(n-s)}-s,
	\qquad
	B(n)=\max\left\{f_n(s):\ s\in\mathbb Z,\ 0\le s\le n/2\right\},
	\]
	and we let $\mathcal S(n)=\{s\in\mathbb Z:\ 0\le s\le n/2,\ f_n(s)=B(n)\}$ denote the set of
	\emph{optimal part sizes}.
	Since $\alpha'(K_{p,q})=p$ and $R(K_{p,q})=\sqrt{pq}$ for $p\le q$, we have
	$R(K_{s,n-s})-\alpha'(K_{s,n-s})=f_n(s)$ for $0\le s\le n/2$. The following elementary
	identity shows that the conjectured bound below is exactly $f_n\!\left(\lfloor\frac{n+4}{7}
	\rfloor\right)$.

	\begin{obs}\label{obs:floor}
	For every positive integer $n$ we have
	$\left\lfloor\frac{6n+2}{7}\right\rfloor=n-\left\lfloor\frac{n+4}{7}\right\rfloor$.
	\end{obs}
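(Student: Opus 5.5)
We prove the identity by writing $n$ in terms of its residue modulo $7$ and computing both floors directly. Write $n=7k+r$ with $k\ge 0$ an integer and $r\in\{0,1,\dots,6\}$. Then
\[
\frac{6n+2}{7}=6k+\frac{6r+2}{7},
\qquad
\frac{n+4}{7}=k+\frac{r+4}{7},
\]
so $\left\lfloor\frac{6n+2}{7}\right\rfloor=6k+\left\lfloor\frac{6r+2}{7}\right\rfloor$ and $n-\left\lfloor\frac{n+4}{7}\right\rfloor=6k+r-\left\lfloor\frac{r+4}{7}\right\rfloor$. The contributions of $k$ agree on both sides. It therefore suffices to verify
\[
\left\lfloor\frac{6r+2}{7}\right\rfloor=r-\left\lfloor\frac{r+4}{7}\right\rfloor
\qquad\text{for } r=0,1,\dots,6 .
\]

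This is a finite check. For $r=0,1,2$ we have $r+4<7$, so the right side equals $r$. On the left side, $6r+2\in\{2,8,14\}$ gives $\lfloor\cdot/7\rfloor\in\{0,1,2\}$, which matches. For $r=3,4,5,6$ we have $7\le r+4<14$, so the right side equals $r-1\in\{2,3,4,5\}$. On the left side, $6r+2\in\{20,26,32,38\}$ gives $\lfloor\cdot/7\rfloor\in\{2,3,4,5\}$, which again matches.

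A slicker alternative is the following. Write $\frac{6n+2}{7}=n-\frac{n-2}{7}$, so the left side equals $n-\left\lceil\frac{n-2}{7}\right\rceil$. Then apply the standard identity $\lceil m/7\rceil=\lfloor (m+6)/7\rfloor$ with $m=n-2$, which gives $\left\lceil\frac{n-2}{7}\right\rceil=\left\lfloor\frac{n+4}{7}\right\rfloor$. This route avoids the case check altogether.

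Neither route contains any real obstacle. The only point needing care is the bookkeeping of the floor and ceiling conversion, and in particular using $\lfloor -x\rfloor=-\lceil x\rceil$ correctly.
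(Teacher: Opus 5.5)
Your proof is correct, and your main route is essentially the paper's: it writes $n=7q+t$ and does the same residue check, packaged as $\lfloor\frac{t+4}{7}\rfloor=\varepsilon$ and $\lfloor\frac{6t+2}{7}\rfloor=t-\varepsilon$ with $\varepsilon=1$ exactly when $t\ge3$. Your alternative via $\frac{6n+2}{7}=n-\frac{n-2}{7}$ and $\lceil m/7\rceil=\lfloor (m+6)/7\rfloor$ is also valid for all integers $m$, including $m=-1$ when $n=1$.
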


	\begin{proof}
	Write $n=7q+t$ with $0\le t\le 6$. Then
	$\lfloor\frac{n+4}{7}\rfloor=q+\lfloor\frac{t+4}{7}\rfloor=q+\varepsilon$ and $\lfloor\frac{6n+2}{7}\rfloor=6q+\lfloor\frac{6t+2}{7}\rfloor=6q+t-\varepsilon$, where
	$\varepsilon=1$ if $t\ge3$ and $\varepsilon=0$ otherwise.
	Hence
	$\lfloor\frac{6n+2}{7}\rfloor=7q+t-q-\varepsilon=n-\lfloor\frac{n+4}{7}\rfloor$.
	\end{proof}

	\begin{conj}[\cite{AHZ}]\label{conj:connected}
	If $G$ is a connected $n$-vertex graph with $n\ge3$, then
	\[
	R(G)-\alpha'(G)\le\sqrt{\left\lfloor\frac{n+4}{7}\right\rfloor
	\left\lfloor\frac{6n+2}{7}\right\rfloor}-\left\lfloor\frac{n+4}{7}\right\rfloor,
	\]
	with equality if and only if $G=K_{p,q}$, where $p=\lfloor\frac{n+4}{7}\rfloor$ and
	$q=\lfloor\frac{6n+2}{7}\rfloor$.
	\end{conj}

	\begin{conj}[\cite{AHZ}]\label{conj:general}
	If $G$ is an $n$-vertex graph with $n\ge3$, then $R(G)-\alpha'(G)$ is at most the maximum
	of the bound in Conjecture~\ref{conj:connected} and $\frac{\sqrt6-1}{7}\,n$.
	\end{conj}

	The second bound in Conjecture~\ref{conj:general} comes from the disjoint union of $n/7$
	copies of $K_{1,6}$. The analogous extremal problem restricted to subcubic graphs was
	recently solved by Du, Hu, and three of the present authors~\cite{DHLOZ}, where the
	existence of a counterexample to Conjecture~\ref{conj:connected} was first announced. No
	result of~\cite{DHLOZ} is used here.

	In this paper we settle both conjectures and determine the exact maximum of
	$R(G)-\alpha'(G)$ together with all extremal graphs. We begin with the following inequality,
	which may be of independent interest. Since $\alpha'(K_{p,q})=p$ and $R(K_{p,q})=\sqrt{pq}$
	for $p\le q$, one may ask whether $R(G)\le\sqrt{\alpha'(G)(n-\alpha'(G))}$ holds in general.
	This is false, but it becomes true under the K\"onig--Egerv\'ary hypothesis.

	\begin{thm}\label{main:KE}
	If $G$ is an $n$-vertex K\"onig--Egerv\'ary graph, in particular if $G$ is bipartite,
	then
	\[
	R(G)\le\sqrt{\alpha'(G)\bigl(n-\alpha'(G)\bigr)}.
	\]
	\end{thm}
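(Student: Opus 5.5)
The plan is to fix a minimum vertex cover and split the edges of $G$ by whether they lie inside it. Put $m=\alpha'(G)$. By the K\"onig--Egerv\'ary hypothesis there is a vertex cover $C$ with $|C|=\tau(G)=m$. Then $I=V(G)\setminus C$ is an independent set with $|I|=n-m$. Every edge is either an edge $E_{CC}$ inside $C$ or an edge $E_{CI}$ joining $C$ to $I$. For $u\in C$ write $d_I(u)$ and $d_C(u)$ for the numbers of neighbours of $u$ in $I$ and in $C$, so $d(u)=d_I(u)+d_C(u)$. Isolated vertices contribute nothing and can be ignored throughout.

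\emph{Step 1: edges between $C$ and $I$.} Write each such edge as $uv$ with $u\in C$ and $v\in I$. Applying Cauchy--Schwarz to $\sum\sqrt{(1/d(u))(1/d(v))}$ gives
\[
\sum_{uv\in E_{CI}}\frac{1}{\sqrt{d(u)d(v)}}\le\Bigl(\sum_{uv\in E_{CI}}\frac1{d(u)}\Bigr)^{1/2}\Bigl(\sum_{uv\in E_{CI}}\frac1{d(v)}\Bigr)^{1/2}.
\]
Since $I$ is independent, every edge at $v\in I$ lies in $E_{CI}$. Hence the second factor counts the non-isolated vertices of $I$, which is at most $n-m$. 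The first factor equals $x:=\sum_{u\in C}d_I(u)/d(u)$.

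\emph{Step 2: edges inside $C$.} Here I use the arithmetic--geometric mean inequality, $1/\sqrt{d(u)d(w)}\le\frac12\bigl(1/d(u)+1/d(w)\bigr)$, which is the form of the Caporossi--Gutman--Hansen--Pavlovi\'c identity. Summing over $E_{CC}$ bounds this part by $\frac12 y$, where $y:=\sum_{u\in C}d_C(u)/d(u)$. Each non-isolated $u\in C$ contributes $1$ to $x+y$, so $x+y\le m$.

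\emph{Step 3: optimization.} Combining the two steps gives
\[
R(G)\le\sqrt{x(n-m)}+\frac{m-x}{2},\qquad 0\le x\le m.
\]
The right-hand side $g(x)$ has derivative $\frac12\bigl(\sqrt{(n-m)/x}-1\bigr)$, which is nonnegative for $x\le n-m$. Since a matching uses $2m$ vertices, $m\le n/2\le n-m$, so $g$ is nondecreasing on $[0,m]$. Its maximum is therefore $g(m)=\sqrt{m(n-m)}$, which proves the bound.

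I expect the main obstacle to be the choice in Step 2. Cauchy--Schwarz applied to all edges at once would lose, because edges inside $C$ would be charged to $I$-side sums that they do not touch. The remedy is to treat edges inside $C$ by AM--GM, so that they draw only on the same budget $m$ as the $C$-side weights. Step 3 then shows that moving weight from $y$ to $x$ never hurts. Tracking the equality cases (equality in Cauchy--Schwarz, $y=0$, no isolated vertices) should yield the characterization stated in the abstract.
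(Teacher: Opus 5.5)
Your proposal is correct and follows the paper's proof of Theorem~\ref{thm:KE} essentially step for step. The matching ingredients are: a minimum vertex cover of size $\alpha'(G)$, Cauchy--Schwarz on the edges between the cover and the independent set, AM--GM on the edges inside the cover, and monotonicity of $\sqrt{x(n-m)}+\frac{m-x}{2}$ on $[0,m]$. The only cosmetic difference is that you handle isolated vertices directly inside the estimates, as the paper later does in the proof of Theorem~\ref{thm:all}, whereas the proof of Theorem~\ref{thm:KE} first deletes them.
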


	The graphs attaining equality are described in Theorem~\ref{thm:KE} below; to state them,
	and also the extremal graphs of our main theorem, we use the following notion. Recall that a
	bipartite graph with parts $X$ and $Y$ is \emph{semiregular} if all vertices of $X$ have the
	same degree and all vertices of $Y$ have the same degree.

	\begin{defn}\label{def:prop}
	Let $G$ be a bipartite graph with parts $X$ and $Y$ and with no isolated vertices. We say
	that $G$ is \emph{$(X,Y)$-proportional} if there is a constant $c>0$ such that
	$d(u)=c\,d(v)$ for every edge $uv\in E(G)$ with $u\in X$ and $v\in Y$.
	\end{defn}

	The constant $c$ is determined by the bipartition: by Lemma~\ref{tool:prop}, if
	$|X|\le|Y|$, then necessarily $c=|Y|/|X|$.

	A connected bipartite graph is $(X,Y)$-proportional if and only if it is semiregular, and a
	bipartite graph is $(X,Y)$-proportional if and only if each of its components is semiregular
	and the ratio of the two degrees is the same in every component. For example, $K_{p,q}$ with
	$p\le q$ is $(X,Y)$-proportional with $c=q/p$, and so is the disjoint union of $p$ copies of
	$K_{1,t}$, with $c=t$. If $|X|\le|Y|$, then $\alpha'(G)=|X|$ and $R(G)=\sqrt{|X||Y|}$; this
	is Lemma~\ref{tool:prop} below.

	Using Theorem~\ref{main:KE} and the Berge--Tutte formula to reduce the general case to it,
	we obtain the exact bound for all graphs.

	\begin{thm}\label{main:all}
	If $G$ is an $n$-vertex graph with $n\ge4$, then
	\[
	R(G)-\alpha'(G)\le B(n)=\max_{\substack{s\in\mathbb Z\\ 0\le s\le n/2}}
	\left(\sqrt{s(n-s)}-s\right),
	\]
	with equality if and only if $G$ is an $(X,Y)$-proportional bipartite graph such that
	$|X|\le n/2$ and $f_n(|X|)=B(n)$. Moreover
	$B(n)=\max\left\{f_n(\lfloor x^*n\rfloor),f_n(\lceil x^*n\rceil)\right\}$, where
	$x^*=\frac{2-\sqrt2}{4}$, and $B(n)\le\frac{\sqrt2-1}{2}\,n$ with
	$B(n)/n\to\frac{\sqrt2-1}{2}$.
	\end{thm}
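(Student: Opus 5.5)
The plan is to combine Theorem~\ref{main:KE} with the Gallai--Edmonds structure theorem, which refines the Berge--Tutte formula. The last sentence of Theorem~\ref{main:all} is elementary, so I dispose of it first. Put $g(x)=\sqrt{x(1-x)}-x$ on $[0,\frac12]$, so that $f_n(s)=n\,g(s/n)$. Then $g$ is strictly concave, and $g'(x)=\frac{1-2x}{2\sqrt{x(1-x)}}-1$ vanishes exactly when $(1-2x)^2=4x(1-x)$, i.e.\ at $x^*=\frac{2-\sqrt2}{4}$, where $g(x^*)=\frac{\sqrt2-1}{2}$. Concavity of $f_n$ in $s$ shows that its maximum over integers is attained at $\lfloor x^*n\rfloor$ or $\lceil x^*n\rceil$. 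It also gives $B(n)\le n\,g(x^*)$. Since $|f_n(\lfloor x^*n\rfloor)-n g(x^*)|=O(1)$, we get $B(n)/n\to\frac{\sqrt2-1}{2}$.

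For the upper bound, take the Gallai--Edmonds decomposition $V(G)=D\cup A\cup C$. The components $D_1,\dots,D_k$ of $G[D]$ are factor-critical with $|D_i|=2k_i+1$, $G[C]$ has a perfect matching, and
\[
\alpha'(G)=\frac{|C|}{2}+|A|+\sum_i k_i .
\]
I split $E(G)$ into two parts.
\begin{itemize}
\item $E_A$: the edges incident to $A$.
\item $E_0$: the remaining edges, which lie inside $C$ or inside a single $D_i$.
\end{itemize}
For $E_0$, I use the arithmetic--geometric mean inequality in the form $\frac{1}{\sqrt{d(u)d(v)}}\le\frac12\bigl(\frac1{d(u)}+\frac1{d(v)}\bigr)$. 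This gives $\sum_{E_0}\le\frac12\sum_{v\notin A}\frac{e_0(v)}{d(v)}$, where $e_0(v)$ counts the $E_0$-edges at $v$. For $E_A$, I repeat the Cauchy--Schwarz step from the proof of Theorem~\ref{main:KE}, using $A$ as the cover side. This gives
\[
\sum_{E_A}\le\sqrt{|A|\,\sum_{w}\frac{e_A(w)}{d(w)}}+(\text{edges inside }A\text{, handled the same way}).
\]
Write $\mu_v=e_0(v)/d(v)$ and $\nu_v=e_A(v)/d(v)$, so that $\mu_v+\nu_v\le1$ for $v\notin A$. The total then becomes a function of $a=|A|$, of the total mass $M=\sum\nu_v$, and of $|C|$ and the $k_i$. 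A vertex of $C$ or of a nontrivial $D_i$ contributes at most $\frac12$ to $R$ but $\frac12$ to $\alpha'$. So $C$ and the nontrivial components should only lose relative to putting all their mass into the $\sqrt{aM}$ term. After this comparison I expect to arrive at
\[
R(G)-\alpha'(G)\le\sqrt{a(n-a)}-a=f_n(a)\le B(n),
\]
with $a\le n/2$ because $|A|<|D|$ (or $G$ has a perfect matching, which is handled by Fajtlowicz's bound $R\le n/2=\alpha'$).

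The step I expect to be the main obstacle is exactly this comparison. A $D_i$ of size $2k_i+1$ has one unmatched vertex, so it contributes $\frac12$ more to the available mass than it costs in $\alpha'$. I must show that this half unit is never better spent inside $D_i$ than on the $A$-side. Equivalently, I need to show that the one-variable inequality $\sqrt{aM}+\frac{m}{2}-a-(\text{matching cost of }m)\le f_n(a)$ holds after optimizing over how the $n-a$ non-$A$ vertices are split. I would first try to verify this via concavity of $t\mapsto\sqrt{at}$ and the derivative bound $\frac{d}{dt}\sqrt{a t}\ge\frac12$ for $t\le a$... The hypothesis $n\ge4$ should enter here, to exclude tiny cases such as a triangle where the slack is reversed.

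For equality, every inequality used must be tight. This forces $C=\emptyset$, all $D_i$ to be singletons, and no edges inside $A$. Hence $G$ is bipartite with parts $A$ and $D$ and $\alpha'=|A|$, so the equality characterization of Theorem~\ref{thm:KE} applies. It says $G$ is $(X,Y)$-proportional with $X=A$. In addition we need $f_n(|X|)=B(n)$. Conversely, Lemma~\ref{tool:prop} gives $R-\alpha'=\sqrt{|X||Y|}-|X|=f_n(|X|)$ for every such graph.
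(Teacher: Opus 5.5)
\textbf{There is a genuine gap, at exactly the step you flagged.} The inequality you expect to reach, $R(G)-\alpha'(G)\le f_n(|A|)$, is false. So no way of carrying out the ``comparison'' can produce it.

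\emph{Counterexamples.} For $G=C_5$ or $G=2K_3$ every vertex is missed by some maximum matching. Hence $D=V(G)$, $A=C=\emptyset$ and $f_n(|A|)=f_n(0)=0$. Yet $R(C_5)-\alpha'(C_5)=\frac12$ and $R(2K_3)-\alpha'(2K_3)=1$. Similarly, $K_{1,t}$ together with $m$ disjoint triangles has $|A|=1$ and $R-\alpha'=\sqrt t-1+\frac m2$. This exceeds $f_n(1)=\sqrt{t+3m}-1$ for large $m$.

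So the half unit gained by each nontrivial odd component can beat the $A$-side at the given value of $|A|$. It only loses against the global optimum $B(n)$, and proving that requires information your plan does not use. Your heuristic that vertices of a nontrivial $D_i$ ``cost $\frac12$'' in $\alpha'$ hides exactly this: $D_i$ costs $k_i$, not $k_i+\frac12$. Your equality analysis (``$C=\emptyset$, all $D_i$ singletons'') needs strict inequality whenever such components are present, so it is unsupported as well.

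\textbf{How to fix it.} Your setup can be rescued. Use the following notation:
\begin{itemize}
\item $a=|A|$;
\item $i$ is the number of non-isolated singleton components of $G[D]$;
\item $k'$ is the number of nontrivial components of $G[D]$;
\item $y\le a$ is the Cauchy--Schwarz mass on the $A$-side;
\item $P=i+M'$, where $M'$ is the $\nu$-mass on $C$ and on the nontrivial $D_j$.
\end{itemize}
Your estimates then combine to $R-\alpha'\le-\frac12\bigl(\sqrt y-\sqrt P\bigr)^2+\frac{i-a+k'}{2}$. Since $y\le a$ and $P\ge i$, this gives
\begin{itemize}
\item $R-\alpha'\le\frac{i-a+k'}{2}$ if $i\le a$;
\item $R-\alpha'\le\sqrt{ai}-a+\frac{k'}{2}$ if $i>a$.
\end{itemize}
The only usable constraint on $k'$ is $3k'\le n-a-i$. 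This is precisely what the paper's inequality \eqref{eq:master} yields after optimizing over $x$.

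The missing idea is to compare with $B(n)$ directly rather than with $f_n(a)$. Using $3k'\le n-a-i$, the two bounds behave as follows.
\begin{itemize}
\item For $i\le a$ the bound is at most $\frac n6$.
\item For $i>a$ it is at most $F(a,i)=\sqrt{ai}-a+\frac{n-a-i}{6}$, which increases in $i$ up to $9a$ and decreases afterwards.
\item Hence for $10a<n$ it is below $\frac a3+\frac n6<\frac n5$.
\item Only for $10a\ge n$ is it at most $F(a,n-a)=f_n(a)$.
\end{itemize}
The first two regimes are closed by the separate estimate $B(n)\ge\frac n5$ for $n\ge5$, together with the direct check $B(4)=\sqrt3-1>\frac23$ (Lemma~\ref{tool:fifth}). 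This is where $n\ge4$ genuinely enters. These regimes also give strict inequality. In the last regime, strict monotonicity of $F(a,\cdot)$ makes the bound strict unless $i=n-a$. So equality forces $i=n-a$, hence $C=\emptyset$ and $k'=0$.

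Once you argue this way, Gallai--Edmonds is unnecessary: any set $S$ maximizing $o(G-S)-|S|$ gives the same bounds, which is what the paper uses.
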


	The hypothesis $n\ge4$ cannot be removed: $R(K_3)-\alpha'(K_3)=\frac12>\sqrt2-1=B(3)$, and
	$K_3$ is the unique exception. Since $\sqrt2-1$ is also the value of the bound of
	Conjecture~\ref{conj:connected} at $n=3$, the triangle is already a degenerate
	counterexample to that conjecture as stated; for this reason we restrict attention to
	$n\ge4$ from now on.

	Theorem~\ref{main:all} shows that complete bipartite graphs are extremal, as predicted by
	AutoGraphiX, but that the proportion $\frac17$ appearing in
	Conjectures~\ref{conj:connected} and~\ref{conj:general} is not correct. The correct
	proportion is $x^*=\frac{2-\sqrt2}{4}=0.146446\ldots$, and the two asymptotic slopes,
	\[
	\frac{\sqrt6-1}{7}=0.2070699\ldots
	\qquad\text{and}\qquad
	\frac{\sqrt2-1}{2}=0.2071067\ldots,
	\]
	differ by less than $3.7\cdot10^{-5}$. This is why a search over graphs of small order
	leads to the proportion $\frac17$: the inequality in Conjecture~\ref{conj:connected} is in
	fact true for every $4\le n\le64$.

	Theorem~\ref{main:all} also reduces each order to a finite check. By
	Lemma~\ref{tool:calculus} we have
	$\mathcal S(n)\subseteq\{\lfloor x^*n\rfloor,\lceil x^*n\rceil\}$, so deciding whether
	either conjecture holds at a given order $n$ amounts to comparing at most three numbers of
	the form $\sqrt{s(n-s)}-s$ with $s$ an integer, together with $\frac{\sqrt6-1}{7}n$. Each
	such comparison becomes an inequality between integers after squaring twice, so the checks
	below are exact and involve no numerical approximation.

	\begin{thm}\label{main:counter}
	Let $m\ge9$ and $n=7m+2$. Then $K_{m+1,6m+1}$ is a connected $n$-vertex graph violating
	Conjecture~\ref{conj:connected}. In particular Conjecture~\ref{conj:connected} fails for
	infinitely many $n$; the smallest order at which it fails is $n=65$, where $K_{10,55}$
	is a counterexample.
	\end{thm}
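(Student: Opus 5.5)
The plan is to reduce everything to an explicit comparison of two values of $f_n$. Fix $m\ge 9$ and $n=7m+2$. Then $\lfloor\frac{n+4}{7}\rfloor=\lfloor\frac{7m+6}{7}\rfloor=m$, and by Observation~\ref{obs:floor} we get $\lfloor\frac{6n+2}{7}\rfloor=n-m=6m+2$. So the right-hand side of Conjecture~\ref{conj:connected} is $f_n(m)=\sqrt{m(6m+2)}-m$.

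The graph $K_{m+1,6m+1}$ is connected and has $n$ vertices. Since $m+1\le 6m+1$, we have $\alpha'(K_{m+1,6m+1})=m+1$ and $R(K_{m+1,6m+1})=\sqrt{(m+1)(6m+1)}$, exactly as noted after Observation~\ref{obs:floor}. Its value of $R-\alpha'$ is therefore $f_n(m+1)$. It remains to show $f_n(m+1)>f_n(m)$, that is,
\[
\sqrt{6m^2+7m+1}>1+\sqrt{6m^2+2m}.
\]
Both sides are positive, so I square once to get $6m^2+7m+1>1+6m^2+2m+2\sqrt{6m^2+2m}$, which is $5m>2\sqrt{6m^2+2m}$. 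Squaring again gives $25m^2>24m^2+8m$, i.e.\ $m>8$. This holds for every $m\ge 9$, so each such $n$ is a counterexample, and there are infinitely many of them.

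At $m=8$ (so $n=58$) the same computation gives equality. This does not violate the inequality of the conjecture, but it does violate its ``only if'' clause, since $K_{9,49}$ also attains the bound. I would remark on this, since ``fails'' in the statement refers to the inequality.

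For minimality, $m=9$ gives $n=65$ and the counterexample $K_{10,55}$. I also need that the inequality holds for every $4\le n\le 64$; this is the main, purely computational, step. By Theorem~\ref{main:all}, every $n$-vertex graph satisfies $R(G)-\alpha'(G)\le B(n)$, and $B(n)$ is attained by $K_{s,n-s}$ for $s\in\mathcal S(n)$, which is connected when $s\ge1$. Hence the inequality of Conjecture~\ref{conj:connected} holds at order $n$ if and only if $B(n)\le f_n(\lfloor\frac{n+4}{7}\rfloor)$.

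By Lemma~\ref{tool:calculus}, $B(n)=\max\{f_n(\lfloor x^*n\rfloor),f_n(\lceil x^*n\rceil)\}$. So for each $n\le 64$ I compare $f_n(p)$, with $p=\lfloor\frac{n+4}{7}\rfloor$, against at most two other values $f_n(s)$. Each comparison $\sqrt{s(n-s)}-s\le\sqrt{p(n-p)}-p$ becomes an integer inequality after squaring twice, as in the computation above, so the check is exact.

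To keep the check short, I would observe that $p$ differs from $x^*n$ by less than one in all these cases, so usually $p$ itself is one of $\lfloor x^*n\rfloor,\lceil x^*n\rceil$. The only competitor is then $p\pm1$. The comparison of $f_n(p+1)$ with $f_n(p)$ reduces, as above, to a quadratic inequality in $n$ and $p$ that can be checked residue class by residue class modulo $7$. This tabulation is tedious but routine; it is where I expect the bulk of the work.
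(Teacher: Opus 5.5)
Your proposal is correct and follows the paper's proof essentially step for step. You use Observation~\ref{obs:floor} to reduce to $f_n(m+1)>f_n(m)$ and square twice to get $m>8$, then use Theorem~\ref{main:all} with Lemma~\ref{tool:calculus} to turn the orders $4\le n\le 64$ into a finite, exact comparison. Your extra observation that $|p-x^*n|<1$ for these $n$, so that only $p\pm1$ competes with $p$, is a valid refinement of the paper's ``compare at most three numbers''.
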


	Conjecture~\ref{conj:general} fails as well, the smallest order at which it fails being
	$n=100$, where $K_{15,85}$ is a counterexample; see Corollary~\ref{cor:general}. We prove
	in Propositions~\ref{prop:allbig} and~\ref{prop:genbig} that both conjectures fail for every
	$n\ge438$. Combining this with a check of the orders $298\le n\le437$, carried out as
	described above, shows that the inequalities in both conjectures
	fail for every $n\ge298$, while both hold for $n=297$.
	The equality statement of Conjecture~\ref{conj:connected} fails much earlier: the extremal
	graph need not be complete bipartite, and we determine in Proposition~\ref{prop:unique}
	exactly when it is. Moreover, two different complete bipartite graphs attain the maximum
	precisely for the orders $n=10,58,338,1970,\dots$ arising from the Pell equation
	$x^2-2y^2=1$ (Proposition~\ref{prop:pell}); already for $n=10$ both $K_{1,9}$ and $K_{2,8}$
	are extremal.

	The paper is organized as follows. Section~\ref{tools} collects the tools we need, including
	the elementary calculus behind $B(n)$ and a description of the graphs that will turn out to
	be extremal. Section~\ref{sec:main} contains the main results: Subsection~\ref{sub:KE} treats
	K\"onig--Egerv\'ary graphs, and Subsection~\ref{sub:all} treats general graphs and the two
	conjectures. Section~\ref{sec:remark} contains some concluding remarks.

	\section{Tools}\label{tools}

	We use the Berge--Tutte formula in the following form; here $o(H)$ denotes the number of
	components of $H$ having an odd number of vertices.

	\begin{thm}[Berge--Tutte formula, \cite{B1958,T1947}]\label{tool:BT}
	For an $n$-vertex graph $G$,
	\[
	\alpha'(G)=\frac12\min_{S\subseteq V(G)}\bigl(n-o(G-S)+|S|\bigr).
	\]
	\end{thm}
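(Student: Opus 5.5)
Before the proof, note that this is the classical Berge--Tutte formula, which the paper cites rather than proves. I would prove it in the standard way: an easy counting inequality, then a reduction of the reverse inequality to Tutte's perfect matching theorem. For that theorem I would use Lov\'asz's edge-maximal counterexample argument.

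\emph{Upper bound.} Fix $S\subseteq V(G)$ and a maximum matching $M$. Each odd component $C$ of $G-S$ cannot be covered by edges inside $C$. So either some vertex of $C$ is unmatched, or some vertex of $C$ is matched to a vertex of $S$. Each vertex of $S$ serves at most one component, so at least $o(G-S)-|S|$ vertices are left unmatched by $M$. Hence $2\alpha'(G)\le n-o(G-S)+|S|$, and minimizing over $S$ gives $\alpha'(G)\le\frac12\min_S(n-o(G-S)+|S|)$.

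\emph{Reduction of the lower bound.} Let $d=\max_S\bigl(o(G-S)-|S|\bigr)$; taking $S=\emptyset$ shows $d\ge0$. Since $o(G-S)\equiv n-|S|\pmod 2$, the quantity $d$ has the parity of $n$. Form $G'$ by adding a set $D$ of $d$ new vertices, each adjacent to every other vertex of $G'$; then $|V(G')|=n+d$ is even. I would check Tutte's condition for $G'$: for every $S'\subseteq V(G')$ we need $o(G'-S')\le|S'|$.
\begin{itemize}
\item If $D\not\subseteq S'$ and $S'\ne V(G')$, then $G'-S'$ is connected, so $o(G'-S')\le1$. By parity, $o(G'-S')\le|S'|$ unless $S'=\emptyset$, and $G'$ itself has even order, so that case is also fine.
\item If $D\subseteq S'$, write $S'=D\cup S$. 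Then $o(G'-S')=o(G-S)\le|S|+d=|S'|$.
\end{itemize}
By Tutte's theorem $G'$ has a perfect matching. Deleting $D$ destroys at most $d$ of its edges, leaving a matching of $G$ of size at least $\frac{n+d}{2}-d=\frac{n-d}{2}$. This is exactly the required lower bound.

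\emph{Tutte's theorem (the main obstacle).} It remains to show that if $o(G-S)\le|S|$ for all $S$, then $G$ has a perfect matching. Taking $S=\emptyset$ shows $n$ is even. Suppose $G$ is an edge-maximal counterexample; adding edges preserves the condition, so every non-edge creates a perfect matching. Let $U$ be the set of vertices adjacent to all others. I would argue that every component of $G-U$ is complete:
\begin{itemize}
\item If some component is not complete, it contains an induced path $x,y,z$ with $xz\notin E$. Since $y\notin U$, there is a vertex $w$ with $yw\notin E$.
\item By maximality, $G+xz$ and $G+yw$ have perfect matchings $M_1\ni xz$ and $M_2\ni yw$.
\item Consider the alternating cycle of $M_1\triangle M_2$ through $xz$. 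Following it, one re-routes along this cycle, using the edges $xy$ or $yz$, to obtain a perfect matching of $G$. This is a contradiction.
\end{itemize}
Once every component of $G-U$ is complete, a perfect matching is built directly:
\begin{itemize}
\item In each odd component, match one vertex to a distinct vertex of $U$; this is possible since $o(G-U)\le|U|$.
\item Pair up the rest of each component internally, which works because the components are complete.
\item Pair up the leftover vertices of $U$ among themselves, since $U$ is a clique; their number is even by parity.
\end{itemize}
The alternating-cycle case analysis is the delicate step. If it becomes messy, I would instead cite Tutte's theorem~\cite{T1947} directly, since the formula itself is only quoted in the paper.
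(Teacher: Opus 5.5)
The paper gives no proof of this statement. It quotes the classical formula of Berge and Tutte and uses it only as a black box in Theorem~\ref{thm:all}, to choose a set $S$ maximizing $o(G-S)-|S|$. Your proposal instead gives the standard textbook proof, and it is correct:
\begin{itemize}
\item The counting argument gives $2\alpha'(G)\le n-o(G-S)+|S|$ for every $S$.
\item The parity observation $d\equiv n\pmod 2$ makes $|V(G')|$ even.
\item Your check of Tutte's condition for $G'$ is complete. If $D\not\subseteq S'$, then $G'-S'$ is connected and nonempty. If $S'=\emptyset$, then $G'$ is connected of even order.
\item Deleting $D$ leaves a matching of size at least $\frac{n-d}{2}$.
\item Lov\'asz's edge-maximal argument with the set $U$ of universal vertices proves Tutte's theorem.
\end{itemize}

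The one step you leave as a sketch, the alternating-cycle step, closes as follows, and the direction of traversal matters. Let $C$ be the cycle of $M_1\triangle M_2$ containing $xz$.
\begin{itemize}
\item If $yw\notin C$, then $M_1\triangle C$ is already a perfect matching of $G$, and neither $xy$ nor $yz$ is needed.
\item If $yw\in C$, walk along $C$ from $y$ \emph{starting with the edge $yw$}. Stop at the first vertex of $\{x,z\}$ you reach, say $z$. Since $w\notin\{x,z\}$ and $xz$ cannot have been traversed yet, this path $P$ begins and ends with $M_2$-edges. Hence $P$ has odd length, and $P+yz$ is an even $M_2$-alternating cycle; note $yz\notin M_1\cup M_2$. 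Then $M_2\triangle(P+yz)$ is a perfect matching of $G$, a contradiction.
\end{itemize}
If you instead leave $y$ along its $M_1$-edge, the path to the first of $x,z$ starts with an $M_1$-edge and ends with an $M_2$-edge. It therefore has even length, closing it with $xy$ or $yz$ gives an odd cycle, and the swap fails.

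Your route makes the paper self-contained at the cost of roughly a page. Since the formula is classical and used only in its stated form, the paper's citation is the natural choice there.
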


	\begin{thm}[K\"onig--Egerv\'ary theorem]\label{tool:KE}
	If $G$ is bipartite, then $\alpha'(G)=\tau(G)$.
	\end{thm}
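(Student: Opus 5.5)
The plan is to prove the nontrivial inequality $\tau(G)\le\alpha'(G)$ by the classical alternating-path construction; the reverse inequality $\alpha'(G)\le\tau(G)$ was already noted in the introduction, since a vertex cover must contain a distinct endpoint of each edge of a matching. Let $G$ have parts $X$ and $Y$, fix a maximum matching $M$, and let $U\subseteq X$ be the set of vertices of $X$ not covered by $M$. An \emph{$M$-alternating path} from $U$ is a path that starts in $U$ and whose edges alternate between $E(G)\setminus M$ and $M$. Let $Z$ be the set of vertices reachable from $U$ by such paths, with $U\subseteq Z$. Because $G$ is bipartite, every such path leaves $X$ along non-matching edges and leaves $Y$ along matching edges. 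The candidate cover is
\[
C=(X\setminus Z)\cup(Y\cap Z).
\]

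First, every vertex of $Y\cap Z$ is covered by $M$. Otherwise an alternating path from $U$ to an uncovered vertex $y$ would be $M$-augmenting: both of its ends are uncovered. Taking the symmetric difference of $M$ with the edge set of this path would then give a matching of size $|M|+1$, contradicting maximality. Every vertex of $X\setminus Z$ is also covered, since $U\subseteq Z$. Next, each edge $xy\in M$ with $x\in X$ and $y\in Y$ has exactly one end in $C$:
\begin{itemize}
\item if $y\in Z$, the alternating path reaching $y$ extends along $yx\in M$, so $x\in Z$ and $y$ is the unique end in $C$;
\item if $y\notin Z$, then $x\notin Z$, because $x\notin U$ and a vertex of $X\setminus U$ can only be reached through its matching edge, i.e.\ from $y$; so $x$ is the unique end in $C$.
\end{itemize}
Since every vertex of $C$ is covered by $M$, and each edge of $M$ has exactly one end in $C$, we get $|C|=|M|=\alpha'(G)$.

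It remains to check that $C$ covers every edge $xy$ with $x\in X$ and $y\in Y$. The only edges that could fail are those with $x\in Z$ and $y\notin Z$. If $xy\in M$, this is excluded by the case analysis above. If $xy\notin M$, take an alternating path from $U$ to $x$. It is either the trivial path, when $x\in U$, or it ends with a matching edge, since vertices of $X\setminus U$ are entered through their matching edges. Either way it can be extended by the non-matching edge $xy$, so $y\in Z$, a contradiction.

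Hence $\tau(G)\le|C|=\alpha'(G)$, and the theorem follows. The main obstacle I expect is the bookkeeping in the step showing that each matching edge has exactly one end in $C$. That step uses the maximality of $M$ via augmenting paths and the parity structure that bipartiteness imposes on alternating paths, so I would write it out carefully.
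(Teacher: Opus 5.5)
Your proof is correct and complete. The paper itself gives no proof of this statement: it quotes the K\"onig--Egerv\'ary theorem as a classical tool and uses it only to know that every bipartite graph is a K\"onig--Egerv\'ary graph, so that Theorem~\ref{thm:KE} applies to bipartite graphs.

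What you supply is the standard alternating-path proof, and each step holds:
\begin{itemize}
\item Starting from the uncovered vertices of $X$, bipartiteness forces entries into $Y$ to use non-matching edges and entries into $X\setminus U$ to use matching edges.
\item Maximality of $M$ (no augmenting path) makes every vertex of $Y\cap Z$ saturated.
\item Your two cases for $xy\in M$ give exactly one end in $C$. So sending each vertex of $C$ to its matching edge is a bijection onto $M$.
\item An uncovered edge would need $x\in Z$ and $y\notin Z$. Your case analysis excludes this when $xy\in M$, and the extension argument excludes it when $xy\notin M$.
\end{itemize}

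One detail to state explicitly when you write it up. Extending an alternating path ending at $x$ by a non-matching edge $xy$ need not produce a path, since $y$ may already occur earlier on it. In that case a prefix of the path already shows $y\in Z$, so the conclusion is unaffected. The analogous worry for extending by the matching edge $yx$ does not arise, because $x$ could only have been entered from its partner $y$.

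Compared with deriving the theorem from Hall's theorem, Menger's theorem, or LP duality, your route is self-contained and constructive. It produces a minimum vertex cover from any maximum matching, which is more than the paper needs.
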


	The next lemma contains the calculus that we need. Recall that
	$f_n(s)=\sqrt{s(n-s)}-s$ and $B(n)=\max\{f_n(s): s\in\mathbb Z,\ 0\le s\le n/2\}$.

	\begin{lemma}\label{tool:calculus}
	Let $n\ge2$ and let $x^*=\frac{2-\sqrt2}{4}$. Then $f_n$ is strictly concave on
	$[0,n/2]$ and attains its maximum over the reals at $s=x^*n$, where
	$f_n(x^*n)=\frac{\sqrt2-1}{2}\,n$. Consequently
	\[
	B(n)=\max\left\{f_n\bigl(\lfloor x^*n\rfloor\bigr),
	f_n\bigl(\lceil x^*n\rceil\bigr)\right\}\le\frac{\sqrt2-1}{2}\,n .
	\]
	\end{lemma}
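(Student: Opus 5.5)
The plan is elementary one-variable calculus, carried out in the scaled variable $s=xn$. Write $f_n(xn)=n\,g(x)$ with $g(x)=\sqrt{x(1-x)}-x$ on $[0,1/2]$. Then strict concavity of $f_n$ on $[0,n/2]$ is equivalent to strict concavity of $g$ on $[0,1/2]$, and the maximiser of $f_n$ is $n$ times the maximiser of $g$.

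For concavity, it suffices to note that $h(s)=\sqrt{s(n-s)}$ is strictly concave on $[0,n]$, since subtracting the linear term $s$ does not affect concavity. One way is geometric: the graph of $h$ is the upper half of the circle of radius $n/2$ centred at $(n/2,0)$. Alternatively, compute on $(0,n)$:
\[
h''(s)=-\frac{n^2}{4\,(s(n-s))^{3/2}}<0,
\]
which, together with continuity at the endpoint $s=0$, gives strict concavity on $[0,n/2]$.

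For the maximiser, solve $g'(x)=0$, that is,
\[
\frac{1-2x}{2\sqrt{x(1-x)}}=1 .
\]
Squaring gives $(1-2x)^2=4x(1-x)$, i.e.\ $8x^2-8x+1=0$, so $x=\frac{2\pm\sqrt2}{4}$. Only $x^*=\frac{2-\sqrt2}{4}$ lies in $(0,1/2)$, and it satisfies the unsquared equation because $1-2x^*>0$. Boundary behaviour confirms this is the maximum: $g'(x)\to+\infty$ as $x\to0^+$, and $g'(1/2)=-1<0$; concavity then makes $x^*$ the unique maximiser. To evaluate, note $x^*(1-x^*)=\frac{(2-\sqrt2)(2+\sqrt2)}{16}=\frac18$, so
\[
g(x^*)=\frac{1}{2\sqrt2}-\frac{2-\sqrt2}{4}=\frac{\sqrt2}{4}-\frac{2-\sqrt2}{4}=\frac{\sqrt2-1}{2}.
\]

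For the consequence, observe that $0\le x^*n<n/2$, so both $\lfloor x^*n\rfloor$ and $\lceil x^*n\rceil$ lie in $[0,n/2]$. For the ceiling this holds because $n/2-x^*n=\frac{\sqrt2}{4}n>0$; when $\lceil x^*n\rceil>n/2$ would occur we need a check. Since $\frac{\sqrt2}{4}n\ge\frac{\sqrt2}{2}$ for $n\ge2$, the gap is less than $1$ only in small cases, where we take $\lceil x^*n\rceil\le\lfloor n/2\rfloor$ directly. For instance $n=2$ gives $x^*n\approx0.29$, with ceiling $1=n/2$.

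By concavity, $f_n$ is nondecreasing on $[0,x^*n]$ and nonincreasing on $[x^*n,n/2]$. Every integer $s\le\lfloor x^*n\rfloor$ therefore has $f_n(s)\le f_n(\lfloor x^*n\rfloor)$, and every integer $s\ge\lceil x^*n\rceil$ has $f_n(s)\le f_n(\lceil x^*n\rceil)$. This gives the formula for $B(n)$, and the bound $B(n)\le f_n(x^*n)=\frac{\sqrt2-1}{2}n$ is immediate.

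The only step that needs care is confirming $\lceil x^*n\rceil\le n/2$ for all $n\ge2$. In general, $x^*n+1\le n/2$ holds once $n\ge 2\sqrt2$, that is, for $n\ge3$; the case $n=2$ was checked by hand above.
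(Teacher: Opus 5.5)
Your proof is correct and follows the paper's route: rescale to $g(x)=\sqrt{x(1-x)}-x$, solve $g'=0$ by squaring to $8x^2-8x+1=0$, evaluate using $x^*(1-x^*)=\frac18$, and use unimodality of $f_n$ to reduce the integer maximum to $\lfloor x^*n\rfloor$ and $\lceil x^*n\rceil$. The only differences are that you prove concavity via $h''(s)=-n^2/(4(s(n-s))^{3/2})$ (or the semicircle) rather than the paper's composition argument, and that you explicitly check $\lceil x^*n\rceil\le n/2$ for all $n\ge2$, which the paper leaves implicit.
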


	\begin{proof}
	Write $f_n(s)=n\,g(s/n)$ with $g(x)=\sqrt{x(1-x)}-x$ for $x\in[0,\frac12]$. The function
	$x\mapsto x(1-x)$ is concave and nonnegative on $[0,1]$, and $t\mapsto\sqrt t$ is concave
	and increasing, so $\sqrt{x(1-x)}$ is concave; subtracting the linear function $x$
	preserves concavity, and strictness follows since $\sqrt{x(1-x)}$ is strictly concave on
	$(0,1)$. For $x\in(0,\frac12]$,
	\[
	g'(x)=\frac{1-2x}{2\sqrt{x(1-x)}}-1=0
	\iff 1-2x=2\sqrt{x(1-x)} .
	\]
	Both sides are nonnegative on $(0,\frac12]$, so squaring gives
	$1-4x+4x^2=4x-4x^2$, that is, $8x^2-8x+1=0$, whose roots are $\frac{2\pm\sqrt2}{4}$; only
	$x^*=\frac{2-\sqrt2}{4}$ lies in $(0,\frac12]$. From $8(x^*)^2-8x^*+1=0$ we get
	$x^*(1-x^*)=\frac18$, whence
	\[
	g(x^*)=\frac{1}{2\sqrt2}-\frac{2-\sqrt2}{4}=\frac{\sqrt2}{4}-\frac{2-\sqrt2}{4}
	=\frac{\sqrt2-1}{2}.
	\]
	Since $f_n$ is strictly concave with maximum at $x^*n$, it is strictly increasing on
	$[0,x^*n]$ and strictly decreasing on $[x^*n,n/2]$, so its maximum over the integers in $[0,n/2]$ is attained
	at $\lfloor x^*n\rfloor$ or at $\lceil x^*n\rceil$.
	\end{proof}

	\begin{lemma}\label{tool:fifth}
	We have $B(n)\ge\frac n5$ for every $n\ge5$, and $B(4)=\sqrt3-1>\frac23$.
	\end{lemma}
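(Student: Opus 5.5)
Since $B(n)$ is a maximum over integers $s$ with $0\le s\le n/2$, it suffices to exhibit one admissible $s$ with $f_n(s)\ge n/5$. The natural choice is $s=\lfloor n/7\rfloor$ or some integer near $x^*n\approx 0.146n$. The plan is to clear the square root: $\sqrt{s(n-s)}-s\ge n/5$ is equivalent, since both sides are nonnegative, to
\[
s(n-s)\ge\Bigl(s+\frac n5\Bigr)^2,
\]
that is,
\[
25\,s(n-s)\ge(5s+n)^2,
\qquad\text{i.e.}\qquad
50s^2-15ns+n^2\le0 .
\]
The quadratic $50s^2-15ns+n^2$ has roots $s=n/10$ and $s=n/5$. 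So the claim reduces to finding an integer $s$ in the closed interval $[n/10,\,n/5]$ (automatically $\le n/2$).

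For $n\ge10$ the interval has length $n/10\ge1$, so it contains an integer, for instance $\lceil n/10\rceil$. For $5\le n\le 9$ the integer $s=1$ works, since $n/10<1\le n/5$. This covers all $n\ge5$.

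For $n=4$, Lemma~\ref{tool:calculus} gives $x^*n=2-\sqrt2\approx0.586$, so the candidates are $s=0$ and $s=1$, with $f_4(0)=0$ and $f_4(1)=\sqrt3-1$. Hence $B(4)=\sqrt3-1$. The inequality $\sqrt3-1>\frac23$ is $\sqrt3>\frac53$, i.e. $27>25$.

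There is no real obstacle; the only care needed is checking the squaring step, which is valid because both sides are nonnegative, and handling the small orders.
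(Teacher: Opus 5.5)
Your proof is correct and matches the paper's argument: you square $f_n(s)\ge n/5$ to get $n/10\le s\le n/5$, pick an integer in that interval when $n\ge10$, take $s=1$ when $5\le n\le 9$, and compute $B(4)$ directly. The only difference is that for $n=4$ you use Lemma~\ref{tool:calculus} to restrict to $s\in\{0,1\}$, whereas the paper simply evaluates $f_4(0),f_4(1),f_4(2)$; both are fine.
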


	\begin{proof}
	For $0\le s\le n/2$, the inequality $\sqrt{s(n-s)}-s\ge\frac n5$ is equivalent to
	$s(n-s)\ge\left(s+\frac n5\right)^2$, that is, to $2s^2-\frac{3n}{5}s+\frac{n^2}{25}\le0$,
	which holds exactly when $\frac{n}{10}\le s\le\frac n5$. This interval is contained in
	$[0,n/2]$ and has length $\frac{n}{10}\ge1$ when $n\ge10$, so it contains an integer;
	for $5\le n\le9$ the integer $s=1$ lies in it. For $n=4$ we have
	$B(4)=\max\{f_4(0),f_4(1),f_4(2)\}=\sqrt3-1>\frac23$.
	\end{proof}

	We also need the following two properties of the $(X,Y)$-proportional bipartite graphs of
	Definition~\ref{def:prop}.

	\begin{lemma}\label{tool:prop}
	Let $G$ be an $n$-vertex $(X,Y)$-proportional bipartite graph with constant $c$, and put
	$p=|X|$. If $p\le n/2$, then $c\ge1$, $|Y|=cp$, $\alpha'(G)=p$, and
	$R(G)=\sqrt{p(n-p)}$. Consequently
	\[
	R(G)-\alpha'(G)=f_n(p).
	\]
	\end{lemma}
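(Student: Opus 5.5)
We prove the statement by double counting the edges across the bipartition, using $d(u)=c\,d(v)$ on every edge. Since $G$ has no isolated vertices and every edge joins $X$ to $Y$, we have
\[
|E(G)|=\sum_{u\in X}d(u)=\sum_{v\in Y}d(v).
\]
To compare $|X|$ and $|Y|$, consider the sum $\sum_{uv\in E(G)}\bigl(\frac{1}{d(v)}-\frac{c}{d(u)}\bigr)$ over edges with $u\in X$, $v\in Y$. Each term is $0$ because $d(u)=c\,d(v)$. On the other hand, each vertex $v\in Y$ contributes $d(v)\cdot\frac{1}{d(v)}=1$, and each $u\in X$ contributes $-c$. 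Hence $|Y|=c|X|=cp$. Since $p\le n/2$ gives $|Y|=n-p\ge p$, we get $c=|Y|/|X|\ge1$; note $p>0$ because $G$ has edges.

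Next we compute the Randi\'c index. For an edge $uv$ with $u\in X$ we have $d(u)d(v)=d(u)^2/c$, so the edge contributes $\sqrt c/d(u)$. Summing over the $d(u)$ edges at each $u\in X$ gives
\[
R(G)=\sqrt c\,|X|=\sqrt{cp\cdot p}=\sqrt{p(n-p)}.
\]

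For the matching number, $X$ is a vertex cover of the bipartite graph $G$, so $\alpha'(G)\le\tau(G)\le p$. For the reverse inequality we use Hall's condition for $X$. Let $A\subseteq X$, and let $F$ be the set of edges incident with $A$. Then $|F|=\sum_{u\in A}d(u)$. Every edge of $F$ ends in $N(A)$, so
\[
|F|\le\sum_{v\in N(A)}d(v)=\sum_{v\in N(A)}\frac{d(u_v)}{c},
\]
which is awkward to use directly. Instead we weight edges: give each edge $uv$ the weight $1/d(u)$. The edges in $F$ then carry total weight exactly $|A|$. The total weight of all edges at a vertex $v\in Y$ is $\sum_{u\sim v}1/d(u)=d(v)\cdot\frac{1}{c\,d(v)}=\frac1c\le1$. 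Therefore
\[
|A|\le\sum_{v\in N(A)}\frac1c\le|N(A)|.
\]
Hall's theorem then gives a matching saturating $X$, so $\alpha'(G)=p$.

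Combining these, $R(G)-\alpha'(G)=\sqrt{p(n-p)}-p=f_n(p)$. The only step needing an idea is the Hall verification, and it is handled by the fractional weighting $1/d(u)$ together with $c\ge1$. If one prefers, K\"onig's theorem (Theorem~\ref{tool:KE}) can be invoked instead, since the same weighting gives a fractional matching of size $p$.
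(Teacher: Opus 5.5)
Your proof is correct, but it is organized differently from the paper's. The paper first uses alternating paths inside each component $H$ to show that $H$ is semiregular, with degrees $a$ on $X\cap V(H)$ and $b$ on $Y\cap V(H)$ satisfying $a=cb$. It then does three computations with these constant degrees and sums over components at the end: the edge count $|X_H|a=|Y_H|b$, the Hall estimate $a|T|=e(T,N(T))\le b|N(T)|$, and the evaluation $R(H)=|E(H)|/\sqrt{ab}$.

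You never pass to components. You charge each edge $uv$ (with $u\in X$, $v\in Y$) with $1/d(v)=c/d(u)$, with $\sqrt c/d(u)$, or with $1/d(u)$, and sum the charges at the vertices. This gives $|Y|=c|X|$, $R(G)=\sqrt c\,|X|$, and Hall's condition directly from the edgewise relation $d(u)=c\,d(v)$ and the absence of isolated vertices. Your Hall step is the weighted form of the paper's, reducing to $a|T|\le b|N(T)|$ when degrees are constant. In effect you observe that $uv\mapsto 1/d(u)$ is a fractional matching with load $1$ at each vertex of $X$ and load $1/c\le1$ at each vertex of $Y$. That is also why your K\"onig alternative works: any vertex cover carries total load at least $p$, so $\tau(G)\ge p$.

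Your route is shorter and needs no connectivity argument. The paper's route costs a few more lines but records the semiregularity of each component. That fact is what Proposition~\ref{prop:unique} uses later, when it writes the degrees in each component as $(j_Hp,j_Hq)$.

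The abandoned display involving $d(u_v)$ is a dead end and can simply be deleted.
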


	\begin{proof}
	Let $H$ be a component of $G$, with parts $X_H=X\cap V(H)$ and $Y_H=Y\cap V(H)$. Any two
	vertices of $X_H$ are joined by a path alternating between $X_H$ and $Y_H$, so applying
	$d(u)=c\,d(v)$ along such a path shows that all vertices of $X_H$ have a common degree
	$a$ and all vertices of $Y_H$ have a common degree $b$, with $a=cb$. Counting the edges
	of $H$ in two ways gives $|X_H|a=|E(H)|=|Y_H|b$, hence $|Y_H|=c\,|X_H|$; summing over the
	components yields $|Y|=c\,|X|=cp$. Since $p\le n/2$ we have $|Y|=n-p\ge p$, and therefore
	$c\ge1$.

	For $T\subseteq X_H$ we have $a|T|=e(T,N(T))\le b\,|N(T)|$, so
	$|N(T)|\ge c|T|\ge|T|$, and Hall's theorem provides a
	matching of $H$ saturating $X_H$. Taking the union over all components gives a matching
	of $G$ saturating $X$, so $\alpha'(G)\ge p$; since $X$ is a vertex cover of $G$ we also
	have $\alpha'(G)\le\tau(G)\le p$, whence $\alpha'(G)=p$.

	Since $R(H)=|E(H)|/\sqrt{ab}=|X_H|\,a/\sqrt{ab}=|X_H|\sqrt{a/b}=\sqrt c\,|X_H|$, we get
	$R(G)=\sqrt c\,p=\sqrt{p\cdot cp}=\sqrt{p(n-p)}$.
	\end{proof}

	\section{Main results}\label{sec:main}

	\subsection{K\"onig--Egerv\'ary graphs}\label{sub:KE}

	\begin{thm}\label{thm:KE}
	Let $G$ be an $n$-vertex K\"onig--Egerv\'ary graph; in particular, $G$ may be any
	bipartite graph. Then
	\[
	R(G)\le\sqrt{\alpha'(G)\bigl(n-\alpha'(G)\bigr)},
	\]
	and, provided that $G$ has at least one edge, equality holds if and only if $G$ is an
	$(X,Y)$-proportional bipartite graph with $|X|=\alpha'(G)$. (If $E(G)=\emptyset$, then
	both sides are $0$.)
	\end{thm}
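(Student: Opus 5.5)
Let $C$ be a minimum vertex cover of $G$, so that $|C|=\tau(G)=\alpha'(G)=:p$, and put $I=V(G)\setminus C$, an independent set with $|I|=n-p$. The idea is to split $R(G)$ into the edges inside $C$ and the edges between $C$ and $I$. For the cross edges we use Cauchy--Schwarz, and for the edges inside $C$ we use the arithmetic--geometric mean inequality in the form of the identity of Caporossi, Gutman, Hansen, and Pavlovi\'c. For $v\in C$ write $d(v)=d_C(v)+d_I(v)$, where $d_C(v)$ counts neighbours in $C$ and $d_I(v)$ counts neighbours in $I$.

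\textbf{Cross edges.} Apply Cauchy--Schwarz with weights $\frac{1}{d(u)}$ and $\frac{1}{d(v)}$:
\[
\sum_{uv\in E(C,I)}\frac{1}{\sqrt{d(u)d(v)}}\le\Bigl(\sum_{uv\in E(C,I)}\frac{1}{d(u)}\Bigr)^{1/2}\Bigl(\sum_{uv\in E(C,I)}\frac{1}{d(v)}\Bigr)^{1/2},
\]
with $u\in C$ and $v\in I$. Every neighbour of a vertex of $I$ lies in $C$, so the second sum equals the number $n_1\le n-p$ of non-isolated vertices of $I$. The first sum is $\sum_{u\in C}d_I(u)/d(u)$.

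\textbf{Edges inside $C$.} By AM--GM,
\[
\sum_{uv\in E(C)}\frac{1}{\sqrt{d(u)d(v)}}\le\sum_{u\in C}\frac{d_C(u)}{2d(u)}.
\]
Writing $t_u=d_C(u)/d(u)\in[0,1]$, we obtain
\[
R(G)\le\sqrt{(n-p)\sum_{u\in C}(1-t_u)}+\tfrac12\sum_{u\in C}t_u .
\]
Put $T=\sum_u t_u\in[0,p]$. It remains to show $\sqrt{(n-p)(p-T)}+T/2\le\sqrt{p(n-p)}$. By concavity it suffices to check the derivative in $T$ at $T=0$, namely $-\tfrac12\sqrt{(n-p)/p}+\tfrac12\le0$. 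This requires $n-p\ge p$, i.e.\ $p\le n/2$, which holds since $\alpha'(G)\le n/2$. The bound follows.

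\textbf{Equality.} Suppose $G$ has an edge. Equality forces $n_1=n-p$, and, when $p<n/2$, the strict decrease in $T$ forces $T=0$, i.e.\ $C$ is independent. When $p=n/2$, $G$ has a perfect matching. The slope at $T=0$ is then $0$ and the function is strictly concave, so again $T=0$ is forced unless edges inside $C$ contribute. Here I would instead argue via $R(G)\le n/2$ with equality iff every component is regular, and then handle the regular components carefully. I expect this boundary case to be the main obstacle: a regular non-bipartite K\"onig--Egerv\'ary graph with a perfect matching would need to be ruled out, or else $C$ chosen suitably. Once $C$ is independent, $G$ is bipartite with parts $X=C$ and $Y=I$. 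Equality in Cauchy--Schwarz then forces $1/d(u)$ to be proportional to $1/d(v)$ on every edge, i.e.\ $G$ is $(X,Y)$-proportional; there are no isolated vertices because $n_1=n-p$ and $C$ is minimum. The converse is Lemma~\ref{tool:prop}.
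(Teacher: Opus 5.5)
Your argument is the paper's proof, reparametrized by $T=s-x$: the same minimum vertex cover, Cauchy--Schwarz on the cross edges, AM--GM on the edges inside $C$, and the same one-variable optimization. Tracking isolated vertices through $n_1$, and noting that a minimum cover contains no isolated vertex, replaces the paper's preliminary reduction. The inequality part is complete.

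The gap is the equality case for $p=n/2$. You leave it open, call it the main obstacle, and propose a detour through $R(G)\le n/2$ and regular components; that detour is unnecessary. Let $\psi(T)=\sqrt{(n-p)(p-T)}+T/2$. Then $\psi'(T)=\frac12-\frac{\sqrt{n-p}}{2\sqrt{p-T}}$. For $0<T<p$ we have $p-T<p\le n-p$, so $\psi'(T)<0$ on the whole open interval $(0,p)$, whether $p<n/2$ or $p=n/2$. When $p=n/2$ the derivative vanishes only at the endpoint $T=0$. Hence $\psi$ is strictly decreasing on $[0,p]$, and $\psi(T)=\psi(0)$ forces $T=0$ in every case.

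This is exactly what your own remark ``strictly concave with slope $0$ at $T=0$'' already yields. The qualifier ``unless edges inside $C$ contribute'' is circular, since $T>0$ is the same as $E(C)\neq\emptyset$. The paper argues identically: $h'(t)>0$ for $0<t<n-s$ and $s\le n-s$, so $h$ is strictly increasing on $[0,s]$.

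The object you feared does not exist in any case. Suppose every component $H$ is $r_H$-regular and $I$ is independent with $|I|=n/2$. Then $r_H|I\cap H|=e(I\cap H,C\cap H)\le r_H|C\cap H|$ in each component. Summing forces equality throughout, so $C$ is independent.

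Once $T=0$ is established, the rest of your equality argument goes through. The step $n_1=n-p$, which you state first, is only legitimate after this, since it needs $p-T>0$. Then equality in Cauchy--Schwarz gives $(C,I)$-proportionality, and the converse follows from Lemma~\ref{tool:prop}.
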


	\begin{proof}
	We first reduce to the case in which $G$ has no isolated vertex. Suppose that the
	theorem holds for every K\"onig--Egerv\'ary graph without isolated vertices, and let $G$
	be arbitrary. If $E(G)=\emptyset$, then both sides of the asserted inequality are $0$.
	Otherwise, let $G'$ be the graph obtained from $G$ by deleting all isolated vertices, and
	put $n'=|V(G')|$ and $a=\alpha'(G)\ge1$. Deleting isolated vertices changes neither the
	matching number nor the minimum size of a vertex cover, so $G'$ is again a
	K\"onig--Egerv\'ary graph with $\alpha'(G')=a$, and $R(G')=R(G)$. Hence
	\[
	R(G)=R(G')\le\sqrt{a\bigl(n'-a\bigr)}\le\sqrt{a(n-a)} ,
	\]
	where the second inequality is strict unless $n'=n$. Thus the inequality for $G$ follows
	from the inequality for $G'$, and a graph attaining equality has no isolated vertex.

	So assume from now on that $d(w)>0$ for every $w\in V(G)$. Let $S$ be a minimum vertex
	cover of $G$ and put $s=|S|$. Since $G$ is a K\"onig--Egerv\'ary graph,
	$s=\tau(G)=\alpha'(G)\le n/2$. Put $I=V(G)\setminus S$; then $I$ is an independent set
	and $|I|=n-s\ge s$.

	Let $E_1$ be the set of edges joining $I$ to $S$ and let $E_2=E(G)\setminus E_1$; since
	$I$ is independent, every edge of $E_2$ has both ends in $S$. For a vertex $w$ and
	$j\in\{1,2\}$, let $d_{E_j}(w)$ be the number of edges of $E_j$ incident with $w$. Put
	\[
	x=\sum_{u\in S}\frac{d_{E_1}(u)}{d(u)}\in[0,s] .
	\]
	Every edge incident with a vertex of $I$ lies in $E_1$, so
	$\sum_{uv\in E_1,\,v\in I}\frac{1}{d(v)}=|I|=n-s$, while
	$\sum_{uv\in E_1,\,u\in S}\frac{1}{d(u)}=x$. Hence, by the Cauchy--Schwarz inequality,
	\begin{equation}\label{eq:CS}
	\sum_{uv\in E_1}\frac{1}{\sqrt{d(u)d(v)}}
	\le\left(\sum_{uv\in E_1}\frac{1}{d(u)}\right)^{1/2}
	\left(\sum_{uv\in E_1}\frac{1}{d(v)}\right)^{1/2}
	=\sqrt{x(n-s)} .
	\end{equation}
	By the inequality of arithmetic and geometric means,
	$\frac{1}{\sqrt{d(u)d(v)}}\le\frac12\left(\frac{1}{d(u)}+\frac{1}{d(v)}\right)$ for every
	edge $uv$, so
	\begin{equation}\label{eq:AM}
	\sum_{uv\in E_2}\frac{1}{\sqrt{d(u)d(v)}}
	\le\frac12\sum_{u\in S}\frac{d_{E_2}(u)}{d(u)}
	=\frac12\sum_{u\in S}\frac{d(u)-d_{E_1}(u)}{d(u)}=\frac{s-x}{2} .
	\end{equation}
	Adding \eqref{eq:CS} and \eqref{eq:AM} gives $R(G)\le h(x)$, where
	$h(t)=\sqrt{t(n-s)}+\frac{s-t}{2}$. Since $h'(t)=\frac{\sqrt{n-s}}{2\sqrt t}-\frac12>0$
	for $0<t<n-s$, and since $s\le n-s$, the function $h$ is strictly increasing on $[0,s]$,
	so
	\[
	R(G)\le h(s)=\sqrt{s(n-s)}=\sqrt{\alpha'(G)\bigl(n-\alpha'(G)\bigr)} .
	\]

	Suppose now that equality holds and that $G$ has an edge, so that $s=\alpha'(G)\ge1$.
	Since $h$ is strictly increasing on $[0,s]$, the equality $h(x)=h(s)$ forces $x=s$, that
	is, $d_{E_1}(u)=d(u)$ for every $u\in S$; hence $E_2=\emptyset$ and $G$ is bipartite with
	parts $S$ and $I$. Equality in the Cauchy--Schwarz inequality \eqref{eq:CS}
	means that the vectors $\left(d(u)^{-1/2}\right)_{uv\in E_1}$ and
	$\left(d(v)^{-1/2}\right)_{uv\in E_1}$ are parallel, that is, there is $\lambda>0$ with
	$d(u)=\lambda^2 d(v)$ for every edge $uv$ with $u\in S$ and $v\in I$. Hence $G$ is
	$(S,I)$-proportional with $|S|=\alpha'(G)$.

	Conversely, if $G$ is $(X,Y)$-proportional with $|X|=\alpha'(G)$, then
	Lemma~\ref{tool:prop} gives
	$R(G)=\sqrt{|X|(n-|X|)}=\sqrt{\alpha'(G)(n-\alpha'(G))}$.
	\end{proof}

	Since $\alpha'(G)\le n/2$, Theorem~\ref{thm:KE} immediately gives the following.

	\begin{corollary}\label{cor:KEdiff}
	If $G$ is an $n$-vertex K\"onig--Egerv\'ary graph, then $R(G)-\alpha'(G)\le B(n)$.
	\end{corollary}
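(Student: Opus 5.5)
The corollary follows directly from Theorem~\ref{thm:KE}. We only need to compare the resulting one-variable bound with the definition of $B(n)$.

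Let $G$ be an $n$-vertex K\"onig--Egerv\'ary graph and put $s=\alpha'(G)$. Then $s$ is an integer with $0\le s\le n/2$. Indeed, a matching of size $s$ uses $2s$ distinct vertices, so $2s\le n$.

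By Theorem~\ref{thm:KE} we have $R(G)\le\sqrt{s(n-s)}$. Subtracting $s$ from both sides gives
\[
R(G)-\alpha'(G)\le\sqrt{s(n-s)}-s=f_n(s).
\]
Since $s$ is one of the admissible integers in the maximum defining $B(n)$, we conclude $f_n(s)\le B(n)$. This is the claimed inequality.

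There is no real obstacle here. The degenerate case $E(G)=\emptyset$ is already covered: Theorem~\ref{thm:KE} gives $0\le 0$ there, and then $s=0$ and $f_n(0)=0\le B(n)$. The only point needing care is to use the integer constraint $0\le s\le n/2$. This places $s$ inside the index set of the maximum, so the bound is $B(n)$ rather than the weaker real maximum $\frac{\sqrt2-1}{2}n$ from Lemma~\ref{tool:calculus}.

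If one also wants the equality case, it follows the same way. Equality forces equality in Theorem~\ref{thm:KE}, so $G$ is $(X,Y)$-proportional with $|X|=\alpha'(G)$. It also forces $f_n(|X|)=B(n)$, which is what Theorem~\ref{main:all} later states for the K\"onig--Egerv\'ary case.
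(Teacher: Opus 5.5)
Your proof is correct and is exactly the paper's argument: the paper obtains the corollary in one line from Theorem~\ref{thm:KE} together with $\alpha'(G)\le n/2$, which puts $s=\alpha'(G)$ in the index set defining $B(n)$. Your side remark on the equality case tacitly assumes $E(G)\neq\emptyset$, as the equality part of Theorem~\ref{thm:KE} does; this is harmless for $n\ge3$, because an edgeless graph attains $B(n)=0$ only when $n\le2$.
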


	The K\"onig--Egerv\'ary hypothesis in Theorem~\ref{thm:KE} cannot be dropped. Indeed, if $G$ is
	$r$-regular with $r\ge1$, then $R(G)=\frac n2$, while
	$\sqrt{\alpha'(G)(n-\alpha'(G))}<\frac n2$ whenever $\alpha'(G)<\frac n2$. Hence the
	inequality of Theorem~\ref{thm:KE} fails for \emph{every} regular graph with no perfect
	matching, for instance for every odd cycle $C_{2k+1}$ with $k\ge2$, and for the regular
	graphs of odd degree with small matching number constructed in~\cite{OW1,OW2}.

	\subsection{General graphs and the conjectures of Aouchiche, Hansen, and Zheng}\label{sub:all}

	We now remove the K\"onig--Egerv\'ary hypothesis. In the proof below, the Berge--Tutte
	formula provides a set $S$; the vertices that are isolated in $G-S$, together with $S$, are
	treated as in Theorem~\ref{thm:KE}, and the components of $G-S$ of odd order at least $3$
	are shown not to occur in an extremal graph.

	Recall that $\mathcal S(n)=\{s\in\mathbb Z:\ 0\le s\le n/2,\ f_n(s)=B(n)\}$ is the set of
	optimal part sizes. By Lemma~\ref{tool:calculus}, $\mathcal S(n)$ consists of one or two
	consecutive integers.

	\begin{thm}\label{thm:all}
	Let $G$ be an $n$-vertex graph with $n\ge4$. Then
	\[
	R(G)-\alpha'(G)\le B(n),
	\]
	with equality if and only if $G$ is an $(X,Y)$-proportional bipartite graph with
	$|X|\in\mathcal S(n)$.
	\end{thm}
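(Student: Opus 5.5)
Let $S$ be a set attaining the minimum in the Berge--Tutte formula (Theorem~\ref{tool:BT}). Let $I$ be the set of vertices that are isolated in $G-S$. Let $C_1,\dots,C_k$ be the odd components of $G-S$ with at least $3$ vertices, and let $W$ be the set of all vertices of $G-S$ not in $I$ (this includes the even components). Put $s=|S|$ and $i=|I|$. Then $o(G-S)=i+k$, so
\[
\alpha'(G)=\tfrac12\,(n-i-k+s),\qquad |W|=n-s-i\ge 3k .
\]
Isolated vertices of $G$ can be discarded as in the proof of Theorem~\ref{thm:KE}, so I assume $d(w)>0$ for all $w$.

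\emph{Bounding $R(G)$.} I split $E(G)$ into $E_1$, the edges between $S$ and $I$, and $E_2=E(G)\setminus E_1$. Every edge of $E_2$ lies inside $S$, between $S$ and $W$, or inside $W$. Set $x=\sum_{u\in S}d_{E_1}(u)/d(u)\in[0,s]$.
\begin{itemize}
\item On $E_1$, Cauchy--Schwarz gives $\sum_{E_1}(d(u)d(v))^{-1/2}\le\sqrt{x\,i}$, exactly as in \eqref{eq:CS}.
\item On $E_2$, the AM--GM inequality \eqref{eq:AM} applies: the $S$-ends contribute at most $\frac{s-x}{2}$, and the $W$-ends contribute at most $\frac12$ per vertex of $W$.
\end{itemize}
Together these give
\[
R(G)\le \sqrt{x i}+\frac{s-x}{2}+\frac{n-s-i}{2}.
\]
Subtracting the Berge--Tutte value of $\alpha'(G)$ yields
\[
R(G)-\alpha'(G)\le \sqrt{xi}-\frac{x}{2}-\frac{i}{2}+\frac{k}{2}+\Bigl(i-\frac{s}{2}\Bigr)\cdot 0+\dots
\]
Rather than tracking this expression, I maximize $h(t)=\sqrt{ti}+\frac{s-t}{2}$ over $t\in[0,s]$.

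\emph{Case $s\le i$.} Here $h$ is increasing on $[0,s]$, so $R(G)-\alpha'(G)\le\sqrt{si}-s+\frac k2$. Since $i\le n-s-3k$, this is at most $f_{n-3k}(s)+\frac k2$.

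\emph{Case $s>i$.} The maximum of $h$ is $h(i)$, and a direct computation gives $R(G)-\alpha'(G)\le\frac{i-s}{2}+\frac k2<\frac k2\le\frac n6$. This is less than $B(n)$ by Lemma~\ref{tool:fifth}; note $B(4)>\frac23>\frac46$.

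\emph{Main obstacle: the case $s\le i$ with $k\ge1$.} Here I must show $f_{n-3k}(s)+\frac k2<B(n)$. Iterating, it suffices to prove $B(m+3)\ge B(m)+\frac12$ (strictly, for the cases needed), or more directly $f_{m+3}(s')-f_m(s')>\frac12$ at a suitable $s'$. I would use the identity
\[
f_{m+3}(s')-f_m(s')=\frac{3s'}{\sqrt{s'(m+3-s')}+\sqrt{s'(m-s')}}\ \ge\ \frac32\sqrt{\frac{s'}{m+3-s'}} ,
\]
which exceeds $\frac12$ once $s'\ge\frac{m+3}{10}$. Then I would choose $s'$ optimal for $m$, which lies near $x^*m\approx0.146m$ by Lemma~\ref{tool:calculus}. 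This handles large $m$. The small orders, where $\lfloor x^*m\rfloor$ may fall below $(m+3)/10$ or be $0$, I would check separately, using that for $k\ge1$ we have $n\ge3$ and $f_{n-3k}(s)\le B(n-3k)$. The degenerate case $s=0$ also needs care: there $R(G)\le\frac n2$ while $\alpha'(G)=\frac{n-k}{2}$, so $R(G)-\alpha'(G)\le\frac k2\le\frac n6<B(n)$. I expect this small-order bookkeeping to be the most delicate part. It is also why $K_3$ (with $n=3$, $k=1$) escapes and why $n\ge4$ is needed.

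\emph{Equality.} Both non-KE scenarios ($s>i$, or $k\ge1$) give strict inequality. Hence equality forces $k=0$ and $s\le i$, and then all of $V(G)\setminus(S\cup I)$ lies in even components. Equality then also requires $x=s$, so every edge at $S$ goes to $I$. In particular no $S$--$W$ edges exist, so $W$ is a union of components of $G$ on which $R=\frac{|W|}{2}$ but the matching is perfect. Comparing with the KE bound (by concavity, $f_n$ evaluated with part of the vertex set "wasted" is strictly smaller), equality forces $W=\emptyset$. Then $G$ is bipartite with parts $S,I$ and $\alpha'(G)=s$. At that point Theorem~\ref{thm:KE} and $f_n(s)=B(n)$ force $G$ to be $(S,I)$-proportional with $s\in\mathcal S(n)$. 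The converse direction is Lemma~\ref{tool:prop}.
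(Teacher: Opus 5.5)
Your setup (the Berge--Tutte set $S$, the split into $E_1$ and $E_2$, Cauchy--Schwarz on $E_1$, AM--GM on $E_2$) and your case $s>i$ coincide with the paper's proof, where the latter is its Case~1. The routes diverge when $s\le i$ and $k\ge1$.

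\textbf{Where the routes differ.} You use $|W|\ge 3k$ to lower the order, $\sqrt{si}-s+\frac k2\le B(n-3k)+\frac k2$. You then need the increment inequality $B(m+3)>B(m)+\frac12$ for every $m\ge2$. The paper uses the same fact as $\frac k2\le\frac{n-s-i}{6}$ instead, and maximizes $F(s,i)=\sqrt{si}-s+\frac{n-s-i}{6}$ over real $i$. Since $F$ increases up to $i=9s$ and decreases afterwards, there are two cases:
\begin{itemize}
\item If $10s<n$, then $F\le\frac s3+\frac n6$, which is below $B(n)$ by Lemma~\ref{tool:fifth}.
\item Otherwise the maximum is attained only at $i=n-s$, where $F=f_n(s)$.
\end{itemize}
This needs nothing beyond Lemma~\ref{tool:fifth} and no case checking. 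It also settles equality at once: $i=n-s$ forces $r=0$, hence $k=0$ and $W=\emptyset$ together. So your separate argument that $W$ must be empty is not needed. Its justification is also monotonicity, $\sqrt{si}<\sqrt{s(n-s)}$ for $i<n-s$ and $s\ge1$, rather than concavity.

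\textbf{What is left unproved.} Your route is valid, but as written its key step is not established.
\begin{itemize}
\item Your estimate with $s'\in\mathcal S(m)$ gives the increment inequality only when $s'\ge\frac{m+3}{10}$. Via $s'\ge\lfloor x^*m\rfloor$, that is guaranteed only for $m\ge28$. The range $2\le m\le27$ is left to a check you have not carried out.
\item That check does succeed if you compare $B(m+3)$ with $B(m)$ directly; the smallest increment in that range is $B(10)-B(7)=3-\sqrt6\approx0.551$.
\item It does not always succeed with your choice of $s'$. For $m=9$ you have $\mathcal S(9)=\{1\}$, and $f_{12}(1)-f_9(1)=\sqrt{11}-\sqrt8\approx0.488<\frac12$, so there you must use $s=2$ at order $12$.
\item The increment inequality is false at $m=0$, since $B(3)-B(0)=\sqrt2-1$. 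You should state explicitly that your separate treatment of $s=0$ is what guarantees $m=n-3k\ge2s\ge2$.
\end{itemize}

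\textbf{Smaller points.} Discarding isolated vertices ``as in Theorem~\ref{thm:KE}'' is not automatic here. The reduced order can drop to $3$ (e.g.\ $G=K_3\cup K_1$), where the statement is false. It is simpler to keep the isolated vertices and use $i'\le i$, as the paper does. Finally, the displayed bound ending in ``$\cdot0+\dots$'' should read $\sqrt{xi}-\frac x2-\frac s2+\frac k2$.
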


	\begin{proof}
	By Theorem~\ref{tool:BT}, choose $S\subseteq V(G)$ maximizing $o(G-S)-|S|$ and put
	$s=|S|$. Let $I$ be the set of vertices forming components of order $1$ in $G-S$, let
	$i=|I|$, let $k$ be the number of components of $G-S$ of odd order at least $3$, and put
	$r=n-s-i$. Then $o(G-S)=i+k$ and $3k\le r$, since the $k$ components counted by $k$ are
	vertex-disjoint subsets of $V(G)\setminus(S\cup I)$, each of size at least $3$. By
	Theorem~\ref{tool:BT},
	\begin{equation}\label{eq:alpha}
	\alpha'(G)=\frac12\bigl(n-i-k+s\bigr)=s+\frac{r-k}{2},
	\qquad\text{so}\qquad
	-\alpha'(G)=-s-\frac r2+\frac k2 .
	\end{equation}

	Let $E_1$ be the set of edges joining $I$ to $S$ and let $E_2=E(G)\setminus E_1$. Since
	each vertex of $I$ is isolated in $G-S$, all of its neighbors lie in $S$; hence every
	edge incident with a vertex of $I$ lies in $E_1$, and no edge of $E_2$ meets $I$.
	Arguing as in the proof of Theorem~\ref{thm:KE}, but keeping track of the vertices of
	degree $0$, put
	$x=\sum_{u\in S,\,d(u)>0}\frac{d_{E_1}(u)}{d(u)}\in[0,s]$ and
	$i'=|\{v\in I: d(v)>0\}|\le i$. By the Cauchy--Schwarz inequality,
	\begin{equation}\label{eq:CS2}
	\sum_{uv\in E_1}\frac{1}{\sqrt{d(u)d(v)}}\le\sqrt{x\,i'}\le\sqrt{xi},
	\end{equation}
	and by the arithmetic--geometric mean inequality, using that the vertices of $I$ are
	incident with no edge of $E_2$, that the vertices of $S$ contribute at most $s-x$, and
	that the remaining $r$ vertices contribute at most $r$,
	\begin{equation}\label{eq:AM2}
	\sum_{uv\in E_2}\frac{1}{\sqrt{d(u)d(v)}}
	\le\frac12\sum_{w\in V(G),\,d(w)>0}\frac{d_{E_2}(w)}{d(w)}\le\frac{(s-x)+r}{2}.
	\end{equation}
	Adding \eqref{eq:CS2} and \eqref{eq:AM2} and using \eqref{eq:alpha},
	\begin{equation}\label{eq:master}
	R(G)-\alpha'(G)\le\sqrt{xi}+\frac{s-x+r}{2}-s-\frac r2+\frac k2
	=\sqrt{xi}-\frac x2-\frac s2+\frac k2 .
	\end{equation}
	The function $\varphi(t)=\sqrt{ti}-\frac t2$ is nondecreasing on $[0,i]$ and
	nonincreasing on $[i,\infty)$.

	\medskip\noindent\textbf{Case 1: $i\le s$.} Then $\varphi$ attains its maximum on $[0,s]$
	at $t=i$, with $\varphi(i)=\frac i2$. Hence, by \eqref{eq:master} and $3k\le r=n-s-i$,
	\[
	R(G)-\alpha'(G)\le\frac{i-s}{2}+\frac k2\le\frac{i-s}{2}+\frac{n-s-i}{6}
	=\frac n6+\frac{i-2s}{3}\le\frac n6,
	\]
	because $i\le s$. By Lemma~\ref{tool:fifth}, $\frac n6<\frac n5\le B(n)$ for $n\ge5$, and
	$\frac n6=\frac23<\sqrt3-1=B(4)$ for $n=4$. In particular the inequality is strict in this
	case.

	\medskip\noindent\textbf{Case 2: $i>s$.} Then $\varphi$ attains its maximum on $[0,s]$ at
	$t=s$, so \eqref{eq:master} and $3k\le r=n-s-i$ give
	\[
	R(G)-\alpha'(G)\le\sqrt{si}-s+\frac k2\le\sqrt{si}-s+\frac{n-s-i}{6}=:F(s,i),
	\]
	where $s< i\le n-s$. Since $\frac{\partial F}{\partial i}=\frac12\sqrt{s/i}-\frac16$ is
	positive for $i<9s$ and negative for $i>9s$, the function $F(s,\cdot)$ is strictly
	increasing on $[s,9s]$ and strictly decreasing on $[9s,\infty)$.

	\smallskip\noindent\emph{Case 2a: $10s<n$.} Then $9s<n-s$, so the maximum of $F(s,\cdot)$
	on $[s,n-s]$ is $F(s,9s)=3s-s+\frac{n-10s}{6}=\frac s3+\frac n6$. If $n\ge5$, then
	$s<\frac{n}{10}$ gives $F(s,i)<\frac{n}{30}+\frac n6=\frac n5\le B(n)$ by
	Lemma~\ref{tool:fifth}. If $n=4$, then $10s<4$ forces $s=0$ and
	$F(0,i)=\frac{n-i}{6}\le\frac23<B(4)$. In either case the inequality is strict.

	\smallskip\noindent\emph{Case 2b: $10s\ge n$.} Then $n-s\le9s$, so $F(s,\cdot)$ is
	strictly increasing on $[s,n-s]$; in particular its maximum there is attained only at
	$i=n-s$, and therefore
	\[
	R(G)-\alpha'(G)\le F(s,n-s)=\sqrt{s(n-s)}-s=f_n(s)\le B(n),
	\]
	where the last inequality uses $s< i\le n-s$, hence $s< n/2$.

	\medskip
	This proves the inequality. Suppose that equality holds. By Cases~1 and~2a we must be
	in Case~2b, and every inequality used there must be an equality. In particular
	$i=n-s$, so $r=0$ and hence $k=0$; moreover $x=s$, $i'=i$, equality holds in
	\eqref{eq:CS2}, and $f_n(s)=B(n)$, that is, $s\in\mathcal S(n)$.

	From $r=0$ we get $V(G)=S\cup I$ with $I$ independent. From $x=s$ we get $d(u)>0$ and
	$d_{E_1}(u)=d(u)$ for every $u\in S$, so $E_2=\emptyset$ and $G$ is bipartite with parts
	$S$ and $I$. From $i'=i$ we get that $G$ has no isolated vertices. Equality in
	\eqref{eq:CS2} means that there is $\lambda>0$ with $d(u)=\lambda^2d(v)$ for every edge
	$uv$ with $u\in S$ and $v\in I$. Hence $G$ is $(S,I)$-proportional with
	$|S|=s\in\mathcal S(n)$.

	Conversely, if $G$ is $(X,Y)$-proportional with $|X|=s\in\mathcal S(n)$, then
	Lemma~\ref{tool:prop} gives $R(G)-\alpha'(G)=f_n(s)=B(n)$.
	\end{proof}

	\begin{corollary}\label{cor:asymp}
	If $G$ is an $n$-vertex graph with $n\ge4$, then
	$R(G)-\alpha'(G)\le\frac{\sqrt2-1}{2}\,n$, and this is asymptotically sharp.
	\end{corollary}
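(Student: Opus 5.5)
The upper bound is immediate: Theorem~\ref{thm:all} gives $R(G)-\alpha'(G)\le B(n)$ for every $n$-vertex graph with $n\ge4$, and Lemma~\ref{tool:calculus} gives $B(n)\le\frac{\sqrt2-1}{2}\,n$. So the only real content is the asymptotic sharpness. For this I will exhibit, for each $n\ge4$, a graph whose value is within $O(1)$ of $\frac{\sqrt2-1}{2}\,n$. The divergence of $n$ then gives a ratio tending to $1$.

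For the construction, take $s=\lfloor x^*n\rfloor$ with $x^*=\frac{2-\sqrt2}{4}$, and let $G=K_{s,n-s}$. When $n\ge7$ we have $s\ge1$, since $x^*n\ge 7x^*>1$, and also $s\le n/2$. Lemma~\ref{tool:prop} (or the direct computation $R(K_{s,n-s})=\sqrt{s(n-s)}$, $\alpha'=s$) then gives $R(G)-\alpha'(G)=f_n(s)$. It remains to compare $f_n(s)$ with $f_n(x^*n)=\frac{\sqrt2-1}{2}n$.

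The main step, though still routine, is the $O(1)$ error estimate. Write $f_n(t)=n\,g(t/n)$ with $g(x)=\sqrt{x(1-x)}-x$, as in Lemma~\ref{tool:calculus}. Since $|s-x^*n|<1$, we have $|s/n-x^*|<1/n$. The derivative $g'$ is bounded on any compact subinterval of $(0,\frac12]$, say $[x^*/2,\frac12]$, and $s/n$ lies in that interval for $n$ large. The mean value theorem then gives $|f_n(s)-f_n(x^*n)|=n\,|g(s/n)-g(x^*)|\le C$ for a constant $C$. One could even do better: since $g'(x^*)=0$, the difference is $O(1/n)$. Either bound suffices. Hence
\[
\frac{\sqrt2-1}{2}\,n-C\le R(K_{s,n-s})-\alpha'(K_{s,n-s})\le\frac{\sqrt2-1}{2}\,n,
\]
and dividing by $n$ shows the ratio tends to $1$, which is the asymptotic sharpness. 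As a byproduct, this also gives $B(n)/n\to\frac{\sqrt2-1}{2}$, as claimed in Theorem~\ref{main:all}.
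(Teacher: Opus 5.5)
Your proposal is correct and follows the paper's proof. The upper bound is Theorem~\ref{thm:all} combined with Lemma~\ref{tool:calculus}, and sharpness comes from $K_{s,n-s}$ with $s=\lfloor x^*n\rfloor$ via the estimate $f_n(\lfloor x^*n\rfloor)=\frac{\sqrt2-1}{2}n-O(1)$, which you justify with the mean value theorem; the paper also mentions the Pell-equation orders of Proposition~\ref{prop:pell} as an alternative witness.
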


	\begin{proof}
	Combine Theorem~\ref{thm:all} with Lemma~\ref{tool:calculus}. Sharpness follows
	from the estimate $f_n(\lfloor x^*n\rfloor)=\frac{\sqrt2-1}{2}n-O(1)$, since
	$B(n)\ge f_n(\lfloor x^*n\rfloor)$. Alternatively,
	Proposition~\ref{prop:pell} below gives infinitely many $n$ with
	$B(n)>\frac{\sqrt2-1}{2}n-1$.
	\end{proof}

	The complete bipartite graphs are not the only extremal graphs. The next proposition
	determines when the extremal graph is unique.

	\begin{prop}\label{prop:unique}
	Let $n\ge4$ and $s\in\mathcal S(n)$ with $s\ge1$. The complete bipartite graph
	$K_{s,n-s}$ is the unique $(X,Y)$-proportional bipartite graph with $|X|=s$ if and only
	if $\gcd(s,n)=1$. Consequently, $K_{s,n-s}$ is the unique extremal graph in
	Theorem~\ref{thm:all} if and only if $\mathcal S(n)=\{s\}$ and $\gcd(s,n)=1$.
	\end{prop}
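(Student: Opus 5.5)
The plan is to use the structure from Lemma~\ref{tool:prop}. An $(X,Y)$-proportional bipartite graph with $|X|=s\le n/2$ and constant $c$ is a disjoint union of components $H_j$, each semiregular. If $H_j$ has parts $X_j\subseteq X$ and $Y_j\subseteq Y$ with degrees $a_j$ on $X_j$ and $b_j$ on $Y_j$, then $a_j=cb_j$ and $|Y_j|=c|X_j|$. Since $|Y|=n-s$, we have $c=(n-s)/s$. Write $c=q/p$ in lowest terms, so that $q/p=(n-s)/s$. Then $\gcd(s,n)=\gcd(s,n-s)=s/p$.

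\emph{Case $\gcd(s,n)=1$.} Here $p=s$ and $q=n-s$. From $a_j=cb_j$ we get $pa_j=qb_j$, and since $\gcd(p,q)=1$ this forces $q\mid a_j$. As $a_j\le|Y_j|\le|Y|=q$, we get $a_j=q=|Y|$. So every vertex of $X$ is adjacent to all of $Y$, which means $G=K_{s,n-s}$. Conversely, $K_{s,n-s}$ is proportional with $c=(n-s)/s$, so it is the unique such graph.

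\emph{Case $d=\gcd(s,n)>1$.} Here $d$ divides both $s$ and $n-s$. Take $G$ to be the disjoint union of $d$ copies of $K_{s/d,(n-s)/d}$. Every component has degree ratio $\frac{(n-s)/d}{s/d}=(n-s)/s$, so $G$ is proportional with $|X|=s$. It has $d\ge2$ components, so $G\ne K_{s,n-s}$ and uniqueness fails. If $s=n/2$, then $d=s$ and $G$ is a perfect matching, which is still valid.

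For the ``consequently'' part, I will invoke Theorem~\ref{thm:all}. The extremal graphs are exactly the proportional graphs with $|X|\in\mathcal S(n)$, where $X$ may be taken as the smaller side. If $\mathcal S(n)=\{s\}$ and $\gcd(s,n)=1$, the first part shows the extremal graph is unique. If $\mathcal S(n)$ contains a second value $s'\ne s$, then $K_{s',n-s'}$ is a second extremal graph. It is distinct from $K_{s,n-s}$ because the smaller parts have different sizes; note $s'\ge1$ since $s'$ differs from $s$ by one and $s\ge1$. If $s'=0$ were possible, the definition would exclude it anyway, since there are no isolated vertices. Finally, if $\gcd(s,n)>1$, the construction above gives a second extremal graph.

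The only delicate step is the divisibility argument forcing $a_j=|Y|$ in the coprime case. A secondary care point is confirming that $\mathcal S(n)$ cannot contain $0$ alongside $s$. This follows because $f_n(0)=0<B(n)$ for $n\ge4$, by Lemma~\ref{tool:fifth}.
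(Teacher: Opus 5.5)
Your proof is correct and follows the paper's argument. In the coprime case, $q\mid a_j$ together with $1\le a_j\le|Y|=q$ forces $a_j=|Y|$, hence $G=K_{s,n-s}$; otherwise you use the same disjoint union of $d=\gcd(s,n)$ copies of $K_{s/d,(n-s)/d}$.

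Three small points about the write-up:
\begin{itemize}
\item The claim that $s'\ge1$ because $s'$ differs from $s$ by one is a non sequitur, since it fails for $s=1$. Your later observation $f_n(0)=0<B(n)$ is what actually settles it.
\item The step $a_j=q$ tacitly uses $a_j\ge1$, which holds because proportional graphs have no isolated vertices.
\item Your treatment of the ``consequently'' clause, which the paper leaves implicit, is a useful addition.
\end{itemize}
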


	\begin{proof}
	Let $g=\gcd(s,n-s)=\gcd(s,n)$ and write $s=gq$ and $n-s=gp$, so that $\gcd(p,q)=1$. Let
	$G$ be $(X,Y)$-proportional with $|X|=s$. By Lemma~\ref{tool:prop} its constant is
	$c=(n-s)/s=p/q$, and each component $H$ of $G$ is semiregular with degrees $a_H$ on the
	$X$-side and $b_H$ on the $Y$-side satisfying $a_H/b_H=p/q$; since $\gcd(p,q)=1$, we have
	$(a_H,b_H)=(j_Hp,j_Hq)$ for some positive integer $j_H$, and $a_H\le|Y|=n-s=gp$ forces
	$j_H\le g$. If $j_H=g$ for some component $H$, then $a_H=n-s=|Y|$, so every vertex of
	$X\cap V(H)$ is adjacent to all of $Y$; hence $Y\subseteq V(H)$. Since $G$ has no
	isolated vertices, no component other than $H$ can consist of vertices of $X$ alone, so
	$X\subseteq V(H)$ and $G=H=K_{s,n-s}$.

	If $g=1$, then $j_H=1=g$ for every component, and $G=K_{s,n-s}$.

	Suppose that $g\ge2$. For each $i\in\{1,\ldots,g\}$, let
	$H_i\cong K_{q,p}$ have bipartition $(X_i,Y_i)$, where
	$|X_i|=q$ and $|Y_i|=p$.
	Let
	\[
	G_0=\bigcup_{i=1}^{g}H_i,
	\qquad
	X=\bigcup_{i=1}^{g}X_i,
	\qquad
	Y=\bigcup_{i=1}^{g}Y_i.
	\]
	Then
	\[
	|X|=gq=s
	\qquad\text{and}\qquad
	|Y|=gp=n-s.
	\]
	Moreover, every vertex of \(X\) has degree \(p\), while every vertex
	of \(Y\) has degree \(q\). Thus, for every edge \(uv\in E(G_0)\) with
	\(u\in X\) and \(v\in Y\),
	\(
	d(u)=p=(p/q)d(v).
	\)
	Hence \(G_0\) is \((X,Y)\)-proportional. Since \(g\ge2\), the graph
	\(G_0\) is disconnected, whereas \(K_{s,n-s}\) is connected. Therefore
	\(
	G_0\not\cong K_{s,n-s},
	\)
	and so \(K_{s,n-s}\) is not the unique \((X,Y)\)-proportional
	bipartite graph with \(|X|=s\).
	\end{proof}

	For instance, \(\mathcal S(18)=\{3\}\) and
	\(\gcd(3,18)=3\). Besides \(K_{3,15}\), there is also a connected
	extremal graph that is not complete bipartite. To construct such a
	graph, let
	\(
	X=\{x_1,x_2,x_3\}
	\)
	and partition a set \(Y\) of \(15\) vertices as
	\[
	Y=Y_{12}\mathbin{\dot\cup}Y_{13}\mathbin{\dot\cup}Y_{23},
	\qquad
	|Y_{12}|=|Y_{13}|=|Y_{23}|=5.
	\]
	Define a bipartite graph \(G\) with parts \(X\) and \(Y\) by setting
	\[
	N_G(y)=\{x_i,x_j\}
	\qquad
	\text{for every \(y\in Y_{ij}\), where \(1\le i<j\le3\)}.
	\]
	Then every vertex of \(X\) has degree \(10\), while every vertex of
	\(Y\) has degree \(2\). Moreover, every two vertices \(x_i,x_j\in X\)
	have a common neighbor in \(Y_{ij}\), and hence \(G\) is connected.
	Thus \(G\) is a connected \((10,2)\)-semiregular bipartite graph and
	is not isomorphic to \(K_{3,15}\). By Lemma~\ref{tool:prop},
	\[
	R(G)-\alpha'(G)=\sqrt{3\cdot15}-3=\sqrt{45}-3=R(K_{3,15})-\alpha'(K_{3,15}).
	\]
	Hence both \(G\) and \(K_{3,15}\) are connected extremal graphs on
	\(18\) vertices. This already refutes the equality statement in
	Conjecture~\ref{conj:connected}. In fact the
	equality statement fails much earlier, and even within the class of complete bipartite
	graphs, as we now show.

	\begin{prop}\label{prop:pell}
	Let $n\ge4$. Then $|\mathcal S(n)|=2$, say $\mathcal S(n)=\{s,s+1\}$, if and only if
	$2s(s+1)$ is a perfect square, say $2s(s+1)=y^2$, and $n=2(2s+1)+2y$. Writing
	$x=2s+1$, this happens exactly when $(x,y)$ is a positive solution of the Pell equation
	\[
	x^2-2y^2=1 ,
	\]
	in which case $n=2(x+y)$ and $B(n)=y$. The first four instances are
	\[
	(n,B(n))\in\bigl\{(10,2),\ (58,12),\ (338,70),\ (1970,408)\bigr\},
	\]
	with $\mathcal S(n)=\{1,2\},\{8,9\},\{49,50\},\{288,289\}$, respectively.
	\end{prop}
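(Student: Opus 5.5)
By Lemma~\ref{tool:calculus}, $f_n$ is strictly concave on $[0,n/2]$ with its real maximum at $x^*n$. So $\mathcal S(n)$ has two elements exactly when $f_n(s)=f_n(s+1)$ for some integers $s,s+1\in[0,n/2]$. In that case $x^*n$ lies strictly between them, and these two values are the maxima. Conversely, by strict concavity, if $f_n(s)=f_n(s+1)$ with both in range, then both are optimal. The plan is therefore to solve the equation $f_n(s)=f_n(s+1)$ in integers.

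Writing $\sqrt{(s+1)(n-s-1)}-\sqrt{s(n-s)}=1$, I isolate a root and square:
\[
(s+1)(n-s-1)=s(n-s)+1+2\sqrt{s(n-s)}.
\]
The left side minus $s(n-s)$ equals $n-2s-1$, so the equation becomes $n-2s-2=2\sqrt{s(n-s)}$. This forces $s(n-s)$ to be a perfect square (in fact $4s(n-s)$ a square of an integer of the same parity as $n$). Squaring again gives
\[
(n-2s-2)^2=4s(n-s),
\]
which is a quadratic in $n$:
\[
n^2-4(2s+1)n+4(s+1)^2+\dots=0.
\]
I expand carefully and solve with the quadratic formula. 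The discriminant should reduce to a multiple of $2s(s+1)$, yielding $n=2(2s+1)\pm2y$ with $y^2=2s(s+1)$. Since $n$ is an integer, $2s(s+1)$ must be a perfect square.

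Next I discard the minus sign using the side condition $n-2s-2\ge0$, or the constraint $s+1\le n/2$, and check that the plus sign satisfies both. For the converse, given $y^2=2s(s+1)$ and $n=2(2s+1)+2y$, the squaring steps reverse because the relevant quantities are nonnegative. Then $f_n(s)=f_n(s+1)$, and strict concavity makes both values optimal.

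With $x=2s+1$ we have $x^2-2y^2=4s^2+4s+1-4s(s+1)=1$, so the condition is exactly a positive Pell solution, and $n=2(x+y)$. For the value, I compute $s(n-s)$ with $n-s=3s+2+2y$. I expect $s(n-s)=(s+y)^2$, using $y^2=2s^2+2s$: indeed $(s+y)^2=s^2+2sy+2s^2+2s=s(3s+2+2y)$. Hence $B(n)=f_n(s)=s+y-s=y$. The instances follow from the Pell solutions $(3,2),(17,12),(99,70),(577,408)$, giving $n=10,58,338,1970$ and $s=1,8,49,288$. I must also confirm $n\ge4$ and that $s+1\le n/2$ always holds, which is clear since $n\ge 4s+2$.

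The main obstacle is the bookkeeping in the double squaring. I have to ensure no spurious root is introduced, which I handle by tracking the signs of $n-2s-2$ and of $\sqrt{(s+1)(n-s-1)}-1$. I also have to verify that the two-element case cannot arise without equal values: if $|\mathcal S(n)|=2$, its elements have equal $f_n$-values by definition and are consecutive by Lemma~\ref{tool:calculus}.
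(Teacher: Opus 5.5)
Your proposal is correct and follows essentially the paper's proof. Both arguments use strict concavity to reduce to solving $f_n(s)=f_n(s+1)$ and square twice to reach a quadratic (the paper first writes $n=2N$). Both discard the minus root using $s+1\le n/2$, which would force $s=0$ and hence $n=2$, and both read off $B(n)=y$; your identity $s(n-s)=(s+y)^2$ is the paper's $\beta=N-s-1$ in another form. When you write it out, supply the missing constant term $4(s+1)^2+4s^2$ of the quadratic (its discriminant is $32s(s+1)$), and note that $x^2=1+2y^2$ forces $x$ odd, so every positive Pell solution gives an integer $s\ge1$.
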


	\begin{proof}
	By Lemma~\ref{tool:calculus}, $|\mathcal S(n)|=2$ if and only if $f_n(s)=f_n(s+1)$ for
	some integer $s$ with $0\le s$ and $s+1\le n/2$. Put $\beta=\sqrt{s(n-s)}$ and
	$\beta'=\sqrt{(s+1)(n-s-1)}$. Then $f_n(s)=f_n(s+1)$ means $\beta'=\beta+1$, and since
	$(\beta')^2-\beta^2=n-2s-1$ we get $2\beta+1=n-2s-1$, that is,
	$\beta=\frac n2-s-1$. Hence $|\mathcal S(n)|=2$ if and only if
	\begin{equation}\label{eq:tie}
	\left(\frac n2-s-1\right)^2=s(n-s)
	\end{equation}
	for some integer $s$; note that \eqref{eq:tie} forces $n$ to be even. Writing $n=2N$,
	equation \eqref{eq:tie} becomes $N^2-2N(2s+1)+(2s^2+2s+1)=0$, whose solutions are
	$N=(2s+1)\pm\sqrt{2s(s+1)}$. We must take the plus sign: since $s+1\le n/2=N$, the
	minus sign would give $s+1\le 2s+1-\sqrt{2s(s+1)}$, that is,
	$\sqrt{2s(s+1)}\le s$, hence $2s(s+1)\le s^2$ and $s=0$, which is excluded because
	$n\ge4$. Thus $N=(2s+1)+\sqrt{2s(s+1)}$, and $N$ is an
	integer exactly when $2s(s+1)$ is a perfect square, say $2s(s+1)=y^2$. Thus
	$n=2(2s+1)+2y$, and $B(n)=f_n(s)=\beta-s=N-2s-1=y$.

	With $x=2s+1$, the condition $2s(s+1)=y^2$ is equivalent to
	$(2s+1)^2-1=2y^2$, that is, to $x^2-2y^2=1$; conversely every solution of this Pell
	equation has $x$ odd, so $s=(x-1)/2$ is a nonnegative integer. The first positive
	solutions are $(x,y)=(3,2),(17,12),(99,70),(577,408)$, giving
	$s=1,8,49,288$ and $n=10,58,338,1970$.
	\end{proof}

	Thus, for example, $\mathcal S(10)=\{1,2\}$ and both $K_{1,9}$ and $K_{2,8}$ satisfy
	$R(G)-\alpha'(G)=2$, while $\lfloor\frac{10+4}{7}\rfloor=2$. Hence the equality statement of
	Conjecture~\ref{conj:connected} already fails for $n=10$. Similarly, for $n=58$ both
	$K_{8,50}$ and $K_{9,49}$ give $R(G)-\alpha'(G)=12$.

	We now turn to the inequalities themselves.

	\begin{proof}[Proof of Theorem~\ref{main:counter}]
	Let $m\ge9$ and $n=7m+2$. Then $\lfloor\frac{n+4}{7}\rfloor=\lfloor\frac{7m+6}{7}\rfloor
	=m$, so by Observation~\ref{obs:floor} the bound of Conjecture~\ref{conj:connected} equals
	$f_n(m)=\sqrt{m(6m+2)}-m$, whereas the connected graph $K_{m+1,6m+1}$ satisfies
	$R-\alpha'=f_n(m+1)=\sqrt{(m+1)(6m+1)}-(m+1)$. Therefore the conjecture fails at $n$ if
	and only if
	\[
	\sqrt{6m^2+7m+1}-\sqrt{6m^2+2m}>1 .
	\]
	Both sides being positive, this is equivalent to
	$6m^2+7m+1>1+2\sqrt{6m^2+2m}+6m^2+2m$, that is, to $5m>2\sqrt{6m^2+2m}$, and squaring
	once more to $25m^2>24m^2+8m$, that is, to $m>8$. Hence the conjecture fails for every
	$m\ge9$; taking $m=9$ gives $n=65$ and the graph $K_{10,55}$.

	It remains to consider the orders below $65$. By Theorem~\ref{thm:all}, the inequality
	of Conjecture~\ref{conj:connected} holds at an order $n\ge4$ if and only if
	$\lfloor\frac{n+4}{7}\rfloor\in\mathcal S(n)$, and by Lemma~\ref{tool:calculus} this
	amounts to comparing $f_n\bigl(\lfloor\frac{n+4}{7}\rfloor\bigr)$ with
	$f_n(\lfloor x^*n\rfloor)$ and $f_n(\lceil x^*n\rceil)$. Carrying out these comparisons
	for the $61$ orders $4\le n\le64$ shows that
	$\lfloor\frac{n+4}{7}\rfloor\in\mathcal S(n)$ in each case. Hence $n=65$ is the smallest
	order at least $4$ at which Conjecture~\ref{conj:connected} fails.
	\end{proof}

	Note that $m=8$ gives equality in the last chain of equivalences, which is the tie
	$\mathcal S(58)=\{8,9\}$ of Proposition~\ref{prop:pell}.

	\begin{prop}\label{prop:allbig}
	Conjecture~\ref{conj:connected} fails for every $n\ge438$.
	\end{prop}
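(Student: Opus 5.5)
The plan is to exhibit, for each $n\ge438$, the connected graph $K_{p+1,n-p-1}$ with $p=\lfloor\frac{n+4}{7}\rfloor$, and show that it beats the conjectured bound. By Observation~\ref{obs:floor} that bound equals $f_n(p)$. By Lemma~\ref{tool:prop}, or directly since $R(K_{a,b})=\sqrt{ab}$ and $\alpha'(K_{a,b})=a$ for $a\le b$, the graph $K_{p+1,n-p-1}$ satisfies $R-\alpha'=f_n(p+1)$. So it suffices to prove $f_n(p+1)>f_n(p)$.

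The key tool is Lemma~\ref{tool:calculus}: $f_n$ is strictly concave on $[0,n/2]$ with maximum at $x^*n$. Hence $f_n$ is strictly increasing on $[0,x^*n]$, and it is enough to check $p+1\le x^*n$, since then $p<p+1$ both lie in the increasing range. This also gives $p+1\le n/2$, so $K_{p+1,n-p-1}$ has smaller part $p+1$ and the formula $R-\alpha'=f_n(p+1)$ applies.

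Using $p\le\frac{n+4}{7}$, we have $p+1\le\frac{n+11}{7}$, so it suffices that
\[
\frac{n+11}{7}\le\frac{2-\sqrt2}{4}\,n ,
\]
that is, $n\,(70-49\sqrt2)\ge44$. The left side is increasing in $n$ because $70>49\sqrt2$ (square: $4900>4802$). It therefore suffices to verify the case $n=438$, namely $21462\sqrt2\le30616$.

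The only delicate step is carrying out this comparison exactly rather than numerically, since the margin is small; $438$ is essentially the threshold of this crude estimate. I would square both sides and compare $2\cdot21462^2=921{,}234{,}888$ with $30616^2=937{,}339{,}456$. The inequality holds, so $p+1\le x^*n$ for all $n\ge438$, and strict monotonicity yields $f_n(p)<f_n(p+1)$. Thus Conjecture~\ref{conj:connected} fails at every such $n$.
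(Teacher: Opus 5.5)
Your route is the paper's own. Both proofs reduce to the single inequality $p+1\le\frac{n+11}{7}\le x^*n$ and then use that $f_n$ is strictly increasing on $[0,x^*n]$. Exhibiting $K_{p+1,n-p-1}$ directly is slightly more economical than the paper's comparison with $f_n(\lfloor x^*n\rfloor)\le B(n)$ via Theorem~\ref{thm:all}.

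\textbf{The gap: the one quantitative step is off by a factor of $7$.} Multiplying $\frac{n+11}{7}\le\frac{2-\sqrt2}{4}n$ by $28$ gives $4n+44\le(14-7\sqrt2)n$, that is, $n(10-7\sqrt2)\ge44$. In your scaling this reads $n(70-49\sqrt2)\ge308$, not $\ge44$. So the inequality you verified, $21462\sqrt2\le30616$, does not prove $p+1\le x^*n$ at $n=438$.

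Your version is strictly weaker than what is needed. It holds for every $n\ge63$, yet at $n=65$ both $\frac{n+11}{7}=\frac{76}{7}\approx10.86$ and $p+1=10$ exceed $x^*\cdot65\approx9.52$. The same slip explains why your check showed a comfortable margin ($21462\sqrt2\approx30351.9$ against $30616$). That contradicts your own correct remark that $438$ is essentially the threshold.

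\textbf{The repair.} Check the correct inequality at $n=438$, namely $3066\sqrt2\le4336$. Squaring gives $2\cdot3066^2=18{,}800{,}712<18{,}800{,}896=4336^2$. Equivalently, in your scaling, $21462\sqrt2\le30352$, since $921{,}234{,}888<921{,}243{,}904$. Monotonicity in $n$ (now because $10>7\sqrt2$) extends this to all $n\ge438$, and the rest of your argument goes through unchanged.

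The margin really is thin. At $n=437$ the same test fails, since $2\cdot3059^2=18{,}714{,}962>18{,}714{,}276=4326^2$. This matches the paper's decimal estimate $(x^*-\frac17)\cdot438>1.5715>\frac{11}{7}$. With the constant fixed, your exact squaring is a cleaner verification than the paper's decimal bound.
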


	\begin{proof}
	Put $p=\lfloor\frac{n+4}{7}\rfloor$ and $x^*=\frac{2-\sqrt2}{4}$. The conjectured part
	size $p$ is of order $\frac n7$, while the optimal one is of order $x^*n$; for
	$n\ge438$ the gap between the two is at least $1$, which is what we now verify. Since
	$x^*-\frac17=0.0035894\ldots>0.003588$, we have
	\[
	\left(x^*-\tfrac17\right)n\ge\left(x^*-\tfrac17\right)438>0.003588\cdot438
	=1.571544>\frac{11}{7}=1+\frac47 ,
	\]
	and therefore
	\[
	p\le\frac{n+4}{7}=\frac n7+\frac47\le x^*n-1 .
	\]
	Since $p$ is an integer, it follows that
	$p\le\lfloor x^*n\rfloor-1<\lfloor x^*n\rfloor\le x^*n$. As
	$f_n$ is strictly increasing on $[0,x^*n]$ by
	Lemma~\ref{tool:calculus}, we get
	$f_n(p)<f_n(\lfloor x^*n\rfloor)\le B(n)$. By Theorem~\ref{thm:all} the value $B(n)$ is
	attained by $K_{s,n-s}$ for $s\in\mathcal S(n)$, which is a connected graph, so the
	conjecture fails.
	\end{proof}

	Proposition~\ref{prop:allbig} does not settle Conjecture~\ref{conj:general}, whose bound
	is the larger of the bound of Conjecture~\ref{conj:connected} and $\frac{\sqrt6-1}{7}n$. The
	next proposition shows that from a certain point on the second term is never the larger one,
	so that Conjecture~\ref{conj:general} fails whenever Conjecture~\ref{conj:connected} does.

	\begin{prop}\label{prop:genbig}
	For every $n\ge143$ we have $B(n)>\frac{\sqrt6-1}{7}\,n$. Consequently both
	Conjecture~\ref{conj:connected} and Conjecture~\ref{conj:general} fail for every
	$n\ge438$.
	\end{prop}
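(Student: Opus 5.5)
We need $B(n)>\frac{\sqrt6-1}{7}n$ for $n\ge143$. The plan is to bound $B(n)$ from below by $f_n(s)$ at a single integer $s$ near $x^*n$, and to control the loss caused by rounding. Write $g(x)=\sqrt{x(1-x)}-x$, so that $f_n(s)=n\,g(s/n)$. Then it suffices to find an integer $s$ with
\[
g(s/n)>\frac{\sqrt6-1}{7}=g(1/7).
\]
The last identity holds because $f_7(1)=\sqrt6-1$, that is, $g(1/7)=\frac{\sqrt6-1}{7}$. By Lemma~\ref{tool:calculus}, $g$ is strictly concave on $[0,\frac12]$ with maximum at $x^*>\frac17$. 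Hence the set $\{x: g(x)>g(1/7)\}$ is an open interval $(\frac17,x_2)$ containing $x^*$, where $x_2$ is the other root of $g(x)=g(1/7)$.

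\textbf{Step 1.} Compute $x_2$ explicitly. Setting $\gamma=g(1/7)$, the equation $\sqrt{x(1-x)}=x+\gamma$ squares to
\[
2x^2-(1-2\gamma)x+\gamma^2=0.
\]
Its two roots sum to $\frac{1-2\gamma}{2}$, and one of them is $\frac17$. Therefore
\[
x_2=\frac{1-2\gamma}{2}-\frac17=\frac{5}{14}-\frac{\sqrt6-1}{7}=\frac{7-2\sqrt6}{14}\approx 0.150092.
\]
The interval $(\frac17,x_2)$ has length
\[
\frac{5-2\sqrt6}{14}\approx0.0070635.
\]

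\textbf{Step 2.} Once $n\cdot\frac{5-2\sqrt6}{14}>1$, that is, $n>\frac{14}{5-2\sqrt6}=14(5+2\sqrt6)\approx141.6$, the open interval $(\frac n7,x_2n)$ has length greater than $1$ and so contains an integer $s$. That integer satisfies $0<s<n/2$, and therefore
\[
B(n)\ge f_n(s)=n\,g(s/n)>n\,g(1/7)=\frac{\sqrt6-1}{7}\,n.
\]
This gives the claim for all $n\ge142$, hence for $n\ge143$.

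To keep the argument exact, I would avoid decimals and verify the key inequality $14(5+2\sqrt6)<143$ directly. It is equivalent to $28\sqrt6<73$, that is, $4704<5329$, which holds. The main obstacle is only making sure the strict inequality $g(s/n)>g(1/7)$ holds at the integer found. This follows from strict concavity: on the open interval between its two roots, $g-\gamma$ is strictly positive. The endpoint $s=n/7$ is excluded because the interval is open.

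\textbf{Step 3.} For the consequence, let $n\ge438$. By Proposition~\ref{prop:allbig}, $B(n)$ strictly exceeds the bound of Conjecture~\ref{conj:connected}, and by the first part it also strictly exceeds $\frac{\sqrt6-1}{7}n$. So $B(n)$ exceeds the maximum of the two. By Theorem~\ref{thm:all}, $B(n)$ is attained by the connected graph $K_{s,n-s}$ with $s\in\mathcal S(n)$. This single graph therefore violates both Conjecture~\ref{conj:connected} and Conjecture~\ref{conj:general}.
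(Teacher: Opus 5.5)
Your proof is correct. For the inequality $B(n)>\frac{\sqrt6-1}{7}n$ it takes a genuinely different route from the paper.

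The paper works around the real maximizer $x^*$:
it bounds $|g''|<6$ on $[0.142,0.151]$, then uses $g'(x^*)=0$ and Taylor's theorem to show that rounding $x^*n$ to a nearest integer costs at most $\frac{3}{4n}$. This gives $B(n)\ge\frac{\sqrt2-1}{2}n-\frac{3}{4n}$. It then beats $\frac{\sqrt6-1}{7}n$ using the numerical gap $>3.68\cdot10^{-5}$ between the two slopes, which forces $n^2>20380.4$.

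You instead work directly at the level $\gamma=g(1/7)$. Strict concavity and the explicit second root $\frac{7-2\sqrt6}{14}$ show that any integer in the open interval $\bigl(\frac n7,\frac{7-2\sqrt6}{14}n\bigr)$ works. Such an integer exists once $n>70+28\sqrt6$.

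What each route buys:
\begin{itemize}
\item Your version is fully exact: it reduces to $4704<5329$, with no decimal bounds on $g''$ or on the slopes, and it gives a slightly better range.
\item The paper's version yields a general quantitative estimate, $B(n)\ge\frac{\sqrt2-1}{2}n-\frac{3}{4n}$. This estimate does not depend on the comparison slope and shows the rounding loss is $O(1/n)$.
\end{itemize}

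The deduction for $n\ge438$ is the same as the paper's, which likewise uses $f_n(\lfloor\frac{n+4}{7}\rfloor)<B(n)$ from the proof of Proposition~\ref{prop:allbig}.

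Your decimals need correcting: $\frac{7-2\sqrt6}{14}\approx0.150073$, the interval length is $\frac{5-2\sqrt6}{14}\approx0.0072158$, and $70+28\sqrt6\approx138.59$, not $141.6$. This does not affect the argument, since your exact check $28\sqrt6<73$ is right. In fact $28\sqrt6<69$ (i.e.\ $4704<4761$) shows your method covers every $n\ge139$.
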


	\begin{proof}
	Write $f_n(s)=n\,g(s/n)$ with $g(x)=\sqrt{x(1-x)}-x$ as in the proof of
	Lemma~\ref{tool:calculus}, and let $s$ be an integer nearest to $x^*n$, so that
	$|s-x^*n|\le\frac12$. Since $0<x^*<\frac12$ and $n\ge143$, we have $0\le s\le n/2$, and
	$x:=s/n$ satisfies $|x-x^*|\le\frac{1}{2n}\le\frac{1}{286}$; as $x^*=0.146446\ldots$,
	this gives $x\in J:=[0.142,0.151]$.

	For $x\in J$ we have $u:=x(1-x)\ge0.142\cdot0.858=0.121836$ and
	$(1-2x)^2\le0.716^2=0.512656$, so
	\[
	|g''(x)|=\frac{1}{\sqrt u}+\frac{(1-2x)^2}{4u^{3/2}}
	\le\frac{1}{\sqrt{0.121836}}+\frac{0.512656}{4\cdot(0.121836)^{3/2}}<5.88<6 .
	\]
	Since $g'(x^*)=0$ by Lemma~\ref{tool:calculus}, Taylor's theorem with Lagrange remainder
	provides a $\xi$ between $x$ and $x^*$, hence in $J$, with
	\[
	g(x)=g(x^*)+\frac{g''(\xi)}{2}(x-x^*)^2\ge\frac{\sqrt2-1}{2}-3(x-x^*)^2
	\ge\frac{\sqrt2-1}{2}-\frac{3}{4n^2},
	\]
	and therefore
	\[
	B(n)\ge f_n(s)=n\,g(x)\ge\frac{\sqrt2-1}{2}\,n-\frac{3}{4n} .
	\]
	Rounding $x^*n$ to a nearest integer thus costs at most $\frac{3}{4n}$, while replacing
	the slope $\frac{\sqrt6-1}{7}$ by $\frac{\sqrt2-1}{2}$ gains a multiple of $n$, and for
	$n\ge143$ the gain exceeds the loss. Indeed,
	$\frac{\sqrt2-1}{2}-\frac{\sqrt6-1}{7}=0.0000368\ldots>3.68\cdot10^{-5}$, while
	$\frac{3}{4n}<3.68\cdot10^{-5}\,n$ is equivalent to
	$n^2>\frac{3}{4\cdot3.68\cdot10^{-5}}=20380.4\ldots$, which holds for $n\ge143$ because
	$143^2=20449$. Hence $B(n)>\frac{\sqrt6-1}{7}n$, as claimed.

	Let $n\ge438$ and put $p=\lfloor\frac{n+4}{7}\rfloor$. The proof of
	Proposition~\ref{prop:allbig} gives $f_n(p)<B(n)$, and the above gives
	$\frac{\sqrt6-1}{7}n<B(n)$; so the bound of Conjecture~\ref{conj:general}, being the
	larger of these two numbers, is smaller than $B(n)$. By Theorem~\ref{thm:all} the value
	$B(n)$ is attained by the connected graph $K_{s,n-s}$ with $s\in\mathcal S(n)$, so both
	conjectures fail.
	\end{proof}

	\begin{corollary}\label{cor:general}
	Conjecture~\ref{conj:general} fails for every $n=2(x+y)$ with $x^2-2y^2=1$ and $y\ge70$,
	hence for infinitely many $n$. The smallest order at which it fails is $n=100$, where
	$K_{15,85}$ is a counterexample.
	\end{corollary}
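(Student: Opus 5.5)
The plan is to split the corollary into its two assertions. For the first, we combine Proposition~\ref{prop:pell} with the large-$n$ results Propositions~\ref{prop:allbig} and~\ref{prop:genbig}, checking the single small Pell order directly. For the second, we do a finite exact check over $4\le n\le 100$, as described after Theorem~\ref{main:counter}.

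\emph{Pell orders.} The positive solutions of $x^2-2y^2=1$ with $y\ge70$ are $(99,70),(577,408),\dots$, and they give $n=338,1970,\dots$. Every such order other than $338$ satisfies $n\ge438$, so Proposition~\ref{prop:genbig} already shows that Conjecture~\ref{conj:general} fails there. This leaves $n=338$, for which we handle the two terms of the bound in Conjecture~\ref{conj:general} separately.
\begin{itemize}
\item Since $338\ge143$, Proposition~\ref{prop:genbig} gives $\frac{\sqrt6-1}{7}\cdot338<B(338)$.
\item For the connected term, $p=\lfloor 342/7\rfloor=48$, while Proposition~\ref{prop:pell} gives $\mathcal S(338)=\{49,50\}$. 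Because $f_{338}$ is strictly increasing on $[0,x^*\cdot338]$ by Lemma~\ref{tool:calculus}, we get $f_{338}(48)<f_{338}(49)=B(338)=70$.
\end{itemize}
Hence both terms of the bound are below $B(338)$. By Theorem~\ref{thm:all}, $B(338)$ is attained by $K_{49,289}$, so the conjecture fails at $n=338$. Since the Pell equation has infinitely many solutions, this gives infinitely many failing orders.

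\emph{The order $n=100$.} Here $p=\lfloor104/7\rfloor=14$ and $x^*n=14.64\ldots$. By Lemma~\ref{tool:calculus} we have $\mathcal S(100)\subseteq\{14,15\}$, so it suffices to show two inequalities:
\[
f_{100}(15)=\sqrt{1275}-15>\sqrt{1204}-14=f_{100}(14)
\quad\text{and}\quad
\sqrt{1275}-15>\frac{100(\sqrt6-1)}{7}.
\]
The first reduces to $\sqrt{1275}-\sqrt{1204}>1$. Squaring turns this into $70>2\sqrt{1204\cdot 1}$-type comparisons, specifically $1275-1204-1>2\sqrt{1204}$ after rearranging; each side is handled by squaring again, giving an integer inequality. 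The second is equivalent to $7\sqrt{1275}+... >100\sqrt6$ after clearing denominators, namely $7\sqrt{1275}-5>100\sqrt6$, which is then squared twice to reach an integer inequality.

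\emph{Orders below $100$.} For each $4\le n\le 99$ we would verify $B(n)\le\max\{f_n(p),\frac{\sqrt6-1}{7}n\}$ with $p=\lfloor\frac{n+4}{7}\rfloor$. For $n\le64$ this is immediate, since $p\in\mathcal S(n)$ by the proof of Theorem~\ref{main:counter}. For $65\le n\le99$, the orders where $p\notin\mathcal S(n)$ require checking $B(n)\le\frac{\sqrt6-1}{7}n$. By Lemma~\ref{tool:calculus}, $B(n)=f_n(s)$ for $s=\lfloor x^*n\rfloor$ or $\lceil x^*n\rceil$, and each comparison is again made exact by squaring twice.

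The main obstacle is the tightness of these comparisons. At $n=100$ the margin is about $10^{-5}$: $20.70714\ldots$ against $20.70699\ldots$. So floating-point estimates must be avoided entirely, and every case must be reduced carefully to an exact integer inequality, with attention to the signs of both sides before each squaring.
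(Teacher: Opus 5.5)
Your proof is correct. For the orders $4\le n\le 100$ it is the same finite exact check the paper performs, and your reductions at $n=100$ to $1225>1204$ and to $62500>62475$ are right. The genuine difference is in the Pell family. You note that every Pell order with $y\ge70$ other than $n=338$ is at least $1970\ge438$, so Proposition~\ref{prop:genbig} already covers it. You then handle $n=338$ separately, using $\mathcal S(338)=\{49,50\}$ and the $n\ge143$ part of Proposition~\ref{prop:genbig}.

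The paper instead gives one uniform argument for all Pell orders, without Propositions~\ref{prop:allbig} or~\ref{prop:genbig}. It uses the exact value $B(n)=y$ and derives $\lfloor\frac{n+4}{7}\rfloor<s=\frac{x-1}{2}$ from $3x>4y+15$. It then shows $\frac{\sqrt6-1}{7}n<y$ from $x<\sqrt2\,y+\frac{1}{2\sqrt2\,y}$ together with $\frac{2(\sqrt6-1)(\sqrt2+1)}{7}<1$.

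Your route is shorter, and it makes explicit that the first assertion of the corollary adds only the single order $n=338$ to what Proposition~\ref{prop:genbig} already gives. The paper's route does not depend on the Taylor-remainder numerics of Proposition~\ref{prop:genbig} and gives infinitely many failures on its own. Through $B(n)=y$ it also explains why the Pell orders are natural counterexamples.

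Two minor points. At $n=338$ you get $f_{338}(48)<B(338)$ immediately from $48\notin\mathcal S(338)$. Your monotonicity argument also works, but it tacitly needs $49\le x^*\cdot 338$, which follows from $\mathcal S(338)=\{49,50\}$ and strict concavity. Also, the margin at $n=100$ is about $1.5\cdot10^{-4}$ rather than $10^{-5}$; this is harmless, since your comparisons are exact.
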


	\begin{proof}
	Let $(x,y)$ be a solution of $x^2-2y^2=1$ with $y\ge70$, and put $n=2(x+y)$ and
	$s=\frac{x-1}{2}$. By Proposition~\ref{prop:pell}, $\mathcal S(n)=\{s,s+1\}$ and
	$B(n)=y$. Since $x=\sqrt{2y^2+1}>\sqrt2\,y$ and $y\ge70$, we have
	\[
	3x>3\sqrt2\,y>4.24\,y>4y+15 ,
	\]
	which rearranges to $\frac{2x+2y+4}{7}<\frac{x-1}{2}$, that is,
	$\lfloor\frac{n+4}{7}\rfloor\le\frac{n+4}{7}<s$. Since $f_n(s)=f_n(s+1)$ and $f_n$
	is strictly concave by Lemma~\ref{tool:calculus}, its maximizer $x^*n$ over the reals
	lies in $(s,s+1)$; in particular $s<x^*n$, so $f_n$ is strictly increasing on $[0,s]$,
	and the bound of Conjecture~\ref{conj:connected} is therefore smaller than $B(n)=y$. For the
	second term of Conjecture~\ref{conj:general}, note that
	$x=\sqrt{2y^2+1}<\sqrt2\,y+\frac{1}{2\sqrt2\,y}$ and
	$\frac{2(\sqrt6-1)(\sqrt2+1)}{7}=0.9998222\ldots$, so that
	\[
	\frac{\sqrt6-1}{7}\,n=\frac{2(\sqrt6-1)(x+y)}{7}
	<\frac{2(\sqrt6-1)(\sqrt2+1)}{7}\,y+\frac{\sqrt6-1}{7\sqrt2\,y}
	<0.99983\,y+\frac{0.147}{y}<y ,
	\]
	where the last inequality holds because $\frac{0.147}{y}<0.00017\,y$ for $y\ge70$. Hence
	$B(n)$ exceeds both
	terms, and $K_{s,n-s}$ is a counterexample. By Theorem~\ref{thm:all}, the
	inequality of Conjecture~\ref{conj:general} holds at an order $n\ge4$ if and only
	if $B(n)\le\max\bigl\{f_n(\lfloor\frac{n+4}{7}\rfloor),\frac{\sqrt6-1}{7}n\bigr\}$.
	Comparing these numbers for the $96$ orders $4\le n\le99$ shows that the inequality
	holds in each case, while it fails for $n=100$, where $\mathcal S(100)=\{15\}$ and hence
	$K_{15,85}$ is a counterexample.
	\end{proof}

	\section{Concluding remarks}\label{sec:remark}

	Theorem~\ref{thm:all} determines the maximum of $R(G)-\alpha'(G)$: for every $n\ge4$ the
	maximum equals $B(n)$, it is attained by $K_{s,n-s}$ for every $s\in\mathcal S(n)$, and the
	extremal graphs are exactly the $(X,Y)$-proportional bipartite graphs whose smaller part has
	an optimal size. We conclude with several remarks.

	\medskip\noindent
	\textbf{1. Small orders do not detect the difference.} The conjectured slope
	$\frac{\sqrt6-1}{7}$, coming from the proportion $\frac17$, and the correct slope
	$\frac{\sqrt2-1}{2}$, coming from $x^*=\frac{2-\sqrt2}{4}$, differ by only
	$3.68\cdot10^{-5}$. As a result $\lfloor\frac{n+4}{7}\rfloor$ is an optimal part size, that
	is, $\lfloor\frac{n+4}{7}\rfloor\in\mathcal S(n)$, for every $n\le64$, so a search over
	graphs of small order
	cannot distinguish the two proportions. Other conjectures obtained in this way, whose bounds
	involve a proportion with small denominator, may be worth reexamining.

	\medskip\noindent
	\textbf{2. Connectedness does not change the maximum.} Theorem~\ref{thm:all} makes no
	connectivity assumption, and the maximum over connected graphs is the same, since
	$K_{s,n-s}$ is connected for $s\ge1$. This also explains the form of
	Conjecture~\ref{conj:general}: the disjoint union of $n/7$ copies of $K_{1,6}$ is
	$(X,Y)$-proportional with $|X|=n/7$, so its value $\frac{\sqrt6-1}{7}n$ equals $f_n(n/7)$,
	which is also the value of $K_{n/7,6n/7}$. The two bounds in
	Conjecture~\ref{conj:general} therefore come from two graphs with the same value.

	\medskip\noindent
	\textbf{3. Sharpness of the K\"onig--Egerv\'ary hypothesis.} By Theorem~\ref{thm:KE},
	$R(G)\le\sqrt{\alpha'(G)(n-\alpha'(G))}$ for K\"onig--Egerv\'ary graphs, while the
	inequality fails for every regular graph without a perfect matching. It would be interesting to determine the maximum of
	$R(G)-\sqrt{\alpha'(G)(n-\alpha'(G))}$ over all $n$-vertex graphs, or over all graphs with
	given odd girth. By the proof of Theorem~\ref{thm:all}, the only obstruction is the
	presence of odd components of order at least $3$ in $G-S$.

	\medskip\noindent
	\textbf{4. Bounded maximum degree.} The maximum degree of an extremal graph in
	Theorem~\ref{thm:all} is bounded from below, and is large for most orders: if $G$ is
	$(X,Y)$-proportional with $|X|=s$, then its constant is
	$c=\frac{n-s}{s}$, and by the proof of Proposition~\ref{prop:unique} the degrees on the
	$X$-side are positive multiples of $\frac{n-s}{\gcd(s,n)}$, so that
	$\Delta(G)\ge\frac{n-s}{\gcd(s,n)}$. Since $s/n\to x^*$, we have
	$c\to\frac{1-x^*}{x^*}=3+2\sqrt2=5.82842\ldots$, and a computation shows that $\Delta\le5$
	occurs only for $n\in\{4,5,6,10,12,18,24\}$, while $\Delta=6$ occurs exactly for the multiples
	of $7$ with $n\le133$; in that case $s=\frac n7$ and the graph is the disjoint union of
	$\frac n7$ copies of $K_{1,6}$, which is the graph behind the second bound of
	Conjecture~\ref{conj:general}. This is another explanation of the proportion $\frac17$.

	For connected subcubic graphs the maximum of $R(G)-\alpha'(G)$ equals
	$\frac{\sqrt3+\sqrt6-3}{9}n+\frac{5\sqrt3-4\sqrt6+3}{9}$, attained precisely when
	$n\equiv1\pmod 3$, by the trees obtained by successively gluing copies of $K_{1,3}$ at their
	leaves~\cite{DHLOZ}; here $\frac{\sqrt3+\sqrt6-3}{9}=0.13128\ldots<\frac{\sqrt2-1}{2}$, and
	for $n=4$ the two answers coincide. It would be of interest to determine the maximum for
	$n$-vertex graphs with given maximum degree $D$, or with given minimum degree, using the sharp
	bounds for $R(G)$ obtained in~\cite{OS2018}.

	\medskip\noindent
	\textbf{5. The minimum.} Theorem~\ref{thm:all} determines the maximum of
	$R(G)-\alpha'(G)$; the companion question is its minimum. For connected subcubic graphs the
	sharp answer is $\frac{\sqrt3-2}{6}\,n$, attained only by the graph obtained from $C_{n/2}$ by
	attaching one pendant vertex to each vertex of the cycle~\cite{DHLOZ}. For a graph with no
	isolated vertex, $R(G)\ge\sqrt{n-1}$~\cite{BE} and $\alpha'(G)\le\frac n2$ give
	$R(G)-\alpha'(G)\ge\sqrt{n-1}-\frac n2$, while the corona $K_m\circ K_1$, obtained from $K_m$
	by attaching one pendant vertex to each vertex, has $n=2m$, a perfect matching, and only
	$m$ edges joining vertices of distinct degrees, so that
	\[
	R(K_m\circ K_1)-\alpha'(K_m\circ K_1)
	=-\frac m2\left(1-\frac{1}{\sqrt m}\right)^2=-\frac n4+\sqrt{\frac n2}-\frac12 .
	\]
	We ask whether the minimum is $-\frac n4+O(\sqrt n)$.

	\medskip\noindent
	\textbf{6. Other indices.} The proof of Theorem~\ref{thm:KE} uses only the Cauchy--Schwarz
	and arithmetic--geometric mean inequalities. It would be interesting to know for which exponents $\alpha$ the analogue
	of Theorem~\ref{thm:all} holds for the general Randi\'c index
	$R_\alpha(G)=\sum_{uv\in E(G)}(d(u)d(v))^{\alpha}$, and whether the extremal graphs remain
	the $(X,Y)$-proportional bipartite ones.

	\end{document}